\DeclareMathAlphabet{\mathpzc}{OT1}{pzc}{m}{it}
\DeclareMathOperator{\wt}{wt}
\DeclareMathOperator{\ind}{ind}
\renewcommand{\vec}{\underline}
\renewcommand{\phi}{\varphi}
\newcommand{\n}{\mathbb{N}}
\newcommand{\z}{\mathbb{Z}}
\renewcommand{\c}{\mathbb{C}}
\theoremstyle{plain} 
\newtheorem{thm}{Theorem}[section]
\newtheorem{lem}[thm]{Lemma} 
\newtheorem{lemma}[thm]{Lemma} 
\newtheorem{cor}[thm]{Corollary} 
\newtheorem{prop}[thm]{Proposition}
\newtheorem{conj}[thm]{Conjecture}
\theoremstyle{definition}
\newtheorem{example}[thm]{Example}
\theoremstyle{remark}
\newenvironment{remark}
  {\pushQED{\qed}\remarkx}
  {\popQED\endremarkx}
\newcommand{\BC}{{\mathbb{C}}}
\newcommand{\BH}{{\mathbb{H}}}
\newcommand{\BQ}{{\mathbb{Q}}}
\newcommand{\BZ}{{\mathbb{Z}}}
\newcommand{\CF}{{\mathcal F}}
\newcommand{\CR}{{\mathcal R}}
\newcommand{\Fz}{{\mathfrak{z}}}
\newcommand{\QMod}{\mathsf{QMod}}
\newcommand{\Mod}{\mathsf{Mod}}
\newcommand{\QJac}{\mathsf{QJac}}
\newcommand{\A}{{\mathsf{A}}}
\newcommand{\p}{\mathbb{P}}
\newcommand{\Mbar}{{\overline M}}
\newcommand{\DR}{\mathrm{DR}}
\newcommand{\ev}{{\mathrm{ev}}}
\title{Gromov--Witten theory of K3 surfaces and a Kaneko--Zagier equation for Jacobi forms}
\author{Jan-Willem van Ittersum%
\thanks{\emph{Email}: \href{mailto:j.w.m.vanittersum@uu.nl}{j.w.m.vanittersum@uu.nl}, \newline
Mathematisch Instituut, Universiteit Utrecht, Postbus 80.010, 3508 TA Utrecht, The Netherlands, \newline
Max-Planck-Institut f\"ur Mathematik, Vivatsgasse 7, 53111 Bonn, Germany
}, Georg Oberdieck\thanks{\emph{Email}: \href{mailto:georgo@math.uni-bonn.de}{georgo@math.uni-bonn.de}, \newline
Mathematisches Institut, Universit\"at Bonn, Endenicher Allee 60, 53115 Bonn, Germany
}, Aaron Pixton\thanks{\emph{Email}: \href{mailto:pixton@umich.edu}{pixton@umich.edu}, \newline
Department of Mathematics, University of Michigan, Ann Arbor, USA
}}
\begin{document}
\maketitle


\begin{abstract}
We prove the existence of quasi-Jacobi form solutions for an analogue of the Kaneko--Zagier differential equation for Jacobi forms.
The transformation properties of the solutions under the Jacobi group are derived.
A special feature of the solutions is the polynomial dependence of the index parameter.
The results yield an explicit conjectural description for all double ramification cycle integrals in the Gromov--Witten theory of K3 surfaces.
\end{abstract}

\section{Introduction}
\subsection{K3 surfaces}
The Yau--Zaslow formula
(proven by Beauville \cite{B} and Bryan--Leung \cite{BL}) 
evaluates the generating series
of counts of rational curves on K3 surfaces in primitive classes
as the inverse 
of the discriminant
\[ \Delta(\tau) = q \prod_{n \geq 1} (1-q^{n})^{24} \]
where~$q = e^{2 \pi i \tau}$ and~$\tau \in \BH$ is the standard variable of the upper half-plane.

More general curve counts on K3 surfaces are defined by the Gromov--Witten invariants
\[
\Big\langle \alpha ; \gamma_1, \ldots, \gamma_n \Big\rangle^S_{g,\beta}
:=
\int_{[\Mbar_{g,n}(S,\beta)]^{\text{red}}}
\pi^{\ast}(\alpha) \prod_{i=1}^{n} \ev_i^{\ast}(\gamma_i)
\]
where $\Mbar_{g,n}(S,\beta)$ is the moduli space of~$n$-marked genus~$g$ stable maps to a K3 surface~$S$ representing the class~$\beta \in H^2(S,\BZ)$, and
\[ \pi : \Mbar_{g,n}(S,\beta) \to \Mbar_{g,n}, \quad \ev_i : \Mbar_{g,n}(S,\beta) \to S, \ i=1,\ldots, n \]
are the forgetful and evaluation maps.
The integral is taken over the
reduced virtual fundamental class and the insertions are arbitrary classes
\[ \alpha \in H^{\ast}(\Mbar_{g,n}), \quad \gamma_1, \ldots, \gamma_n \in H^{\ast}(S). \]

Let~$\mathsf{a} = (\mathsf{a}_1, \ldots, \mathsf{a}_n)$ be a list of integers with~$\sum_i \mathsf{a}_i = 0$. The moduli space
$
\Mbar_{g,n}(\p^1,\mathsf{a})^{\widetilde{}},
$
defined in relative Gromov--Witten theory, parametrizes stable maps from a curve of genus~$g$ to~$\p^1$ with ramification profiles over~$0$ and~$\infty$ given respectively by the positive and negative entries in~$\mathsf{a}$.
The double ramification cycle
\[ \DR_{g}(\mathsf{a}) \in H^{2g}(\Mbar_{g,n}) \]
is defined as the pushforward under the forgetful map~$\Mbar_{g,n}(\p^1,\mathsf{a})^{\widetilde{}} \to \Mbar_{g,n}$ of the virtual class on this moduli space (see \cite{DR}).

Let also~$z \in \BC$ and~$p = e^{z}$, and 
consider the odd (renormalized) Jacobi theta function
\[
\Theta(z,\tau) 
=  (p^{1/2}-p^{-1/2})\prod_{m\geq 1} \frac{(1-pq^m)(1-p^{-1}q^m)}{(1-q^m)^{2}}.
\]

The following formula was found in the study of the quantum cohomology
of the Hilbert scheme of points of a K3 surface in \cite{HilbK3},
and related to K3 surfaces in \cite{K3xE}.

\begin{conj}[ \cite{HilbK3, K3xE} ] \label{conj:K3DR} There exist quasi-Jacobi forms~$\varphi_{m}(z,\tau), \phi_{m,n}(z,\tau)$ such that
for all primitive effective~$\beta \in H^2(S,\BZ)$ we have 
\begin{multline*}
\sum_{g = 0}^{\infty} \Big\langle \DR_g(\mathsf{a}) ; \gamma_1, \ldots, \gamma_n \Big\rangle^S_{g,\beta} (-1)^{g+n} z^{2g-2+n} \\
=
\frac{1}{\prod_i \mathsf{a}_i^{\deg(\gamma_i)}}
\mathrm{Coeff}_{q^{\frac{1}{2} \beta^2}}\left( 
\sum_{\{ (a_j, b_j) \}_j, \{ c_j \}_{j} }
\frac{1}{\Theta^2 \Delta}
\prod_{j} (\gamma_{a_j}, \gamma_{b_j}) \varphi_{a_j b_j} \cdot \prod_{j} (\gamma_{c_j}, \beta) \varphi_{c_j}  \right).
\end{multline*}
Here, the sum on the right side is over all partitions of the set~$\{ (\mathsf{a}_i, \gamma_i) \}_{i=1}^{n}$
into parts of size~$\leq 2$. The parts of size~$1$ are labeled by~$(c_j, \gamma_{c_j})$, and the parts of size~$2$ are labeled
$\{ (a_j, \gamma_{a_j}), (b_j, \gamma_{b_j}) \}$. Moreover,
$\deg(\gamma)$ denotes half the cohomological degree of~$\gamma$, i.e.~$\gamma \in H^{2 \deg(\gamma)}(S)$,
and
$( - , - )$ is the Mukai pairing on~$H^{\ast}(S)$ defined by
\[ \big( (r_1, D_1, n_1) , (r_2, D_2, n_2) \big) = r_1 n_2 + r_2 n_1 - D_1 \cdot D_2 \]
where we write ~$D_1 \cdot D_2 = \int_S D_1 \cup D_2$ for the intersection of divisors.
\end{conj}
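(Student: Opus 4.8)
The plan is to reduce the conjecture to the combination of a modularity statement and a differential equation, and then to pin down both sides against the explicit solutions of the Kaneko--Zagier equation. The first step is to exploit deformation invariance of the reduced virtual class: for primitive $\beta$ the invariants $\langle \DR_g(\mathsf{a}); \gamma_1,\dots,\gamma_n\rangle^S_{g,\beta}$ depend on $\beta$ only through $\beta^2$, so that for each fixed configuration of insertions the left-hand side is encoded in a single $q$-series, exactly as extracted by $\mathrm{Coeff}_{q^{\frac12\beta^2}}$ on the right. I would then lift the problem to the Gromov--Witten theory of $S\times E$: the condition $\sum_i \mathsf{a}_i=0$ and the relative/rubber geometry over the elliptic curve $E$ are precisely what produce the double ramification cycle, and the genus-expansion variable $z$ (with $p=e^z$) is identified, after lifting, with the elliptic parameter of $E$ conjugate to the ramification profile. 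This is the geometric origin both of the Jacobi variable and of the theta quotient $1/(\Theta^2\Delta)$.

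The second step is to establish quasi-Jacobi modularity of the resulting series in $(z,\tau)$ and to account for its shape. For modularity I would use the holomorphic anomaly equation together with finite-dimensionality of the space of quasi-Jacobi forms of fixed weight and bounded index, which reduces each series to finitely many unknown coefficients. The sum over partitions of $\{(\mathsf{a}_i,\gamma_i)\}$ into parts of size at most two is, I expect, a manifestation of a Wick-type factorization: the reduced theory should behave as a Gaussian (quasi-free) field in the cohomological insertions over the background class $\beta$, so that every correlator reduces to products of one-point functions $(\gamma_{c_j},\beta)\varphi_{c_j}$ (tadpoles) and two-point functions $(\gamma_{a_j},\gamma_{b_j})\varphi_{a_j b_j}$ (propagators), the higher correlators being forced to vanish. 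I would derive this factorization from the degeneration formula on $S\times E$. Finally, the polynomial dependence on the index parameter matches Pixton's polynomiality of $\DR_g(\mathsf{a})$ in the entries $\mathsf{a}_i$: integrating against fixed K3 classes and resumming the genus with $z^{2g-2+n}$ converts this polynomiality into the polynomial index-dependence of $\varphi_m$ and $\phi_{m,n}$.

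The decisive and hardest step is to show that the two sides coincide and do not merely share this general shape. For this I would prove that the $S\times E$ series is annihilated by the same analogue of the Kaneko--Zagier operator as the forms $\varphi_m,\phi_{m,n}$ constructed in the paper; with matching differential equations the identity would follow by comparing initial conditions, which are fixed by the Yau--Zaslow formula in the case $n=0$ and by the known one- and two-point functions of \cite{HilbK3}, together with an induction on $n$ using the splitting behaviour of the double ramification cycle. I expect the main obstacle to be exactly this last geometric input: whereas the construction of quasi-Jacobi solutions is internal to the theory of Jacobi forms, proving that the Gromov--Witten series itself satisfies the Kaneko--Zagier equation demands a genuinely new relation among classes on $\Mbar_{g,n}$ (or an equivalent Virasoro-type constraint on the reduced theory), and it is the absence of such a relation that keeps the statement at the level of a conjecture.
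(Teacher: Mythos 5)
The statement you have attempted to prove is Conjecture~\ref{conj:K3DR}: it is open, and the paper contains no proof of it. The paper's actual contribution is much narrower: it constructs explicit candidate functions $\phi_m$, $\phi_{m,n}$ as solutions of a Kaneko--Zagier type differential equation, proves they are quasi-Jacobi forms (Theorems~\ref{thm:sol} and~\ref{Thm_phi_mn}) and derives their holomorphic anomaly equations, and then \emph{conjectures}, in a separate and equally unproven statement, that these functions are the ones appearing in Conjecture~\ref{conj:K3DR}. So there is no paper proof to compare against, and your text has to stand on its own as an argument.

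As such it has a genuine gap, which you yourself name in your last sentence: nothing in the proposal establishes that the reduced Gromov--Witten series satisfies the Kaneko--Zagier type equation (or any other property that pins it down), and without that step the scheme proves nothing. The earlier steps are also weaker than they are presented. Deformation invariance (dependence on $\beta$ only through $\beta^2$) is indeed standard for the reduced theory, but the quasi-Jacobi modularity and the holomorphic anomaly equations for these series are themselves conjectural --- the paper cites them only as ``expected from holomorphic-symplectic geometry \cite{HilbK3} and the results of \cite{HAE}'' --- so you cannot invoke a holomorphic anomaly equation to reduce each series to finitely many coefficients. Likewise, the Wick-type factorization into one- and two-point functions is precisely the shape asserted by the conjecture, not something currently derivable: a degeneration argument would require the splitting machinery to be compatible with the \emph{reduced} virtual class, which is exactly where such arguments break down. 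A smaller point: the geometry in which $\DR_g(\mathsf{a})$ arises here is $S\times\p^1$ relative to two fibers (as the paper notes), rather than $S\times E$. In short, your proposal is a reasonable account of why the conjecture has the shape it does and of what a proof would require, but it is a research program whose central step is missing --- consistent with the fact that the statement remains a conjecture both in this paper and in \cite{HilbK3, K3xE}.
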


We refer to Section~\ref{subsec:QuasiJacobi} for the definition of quasi-Jacobi forms.
The left hand side of the conjecture is a (virtual) count of curves on K3 surfaces,
whose normalization admits a map to~$\p^1$ with prescribed ramification over two points of the target and with the ramification points
incident to given cycles~$\gamma_i$.
If there are no marked points, the double ramification cycle is the top Chern class~$\lambda_g$ of the Hodge bundle over the moduli spae of curves, 
\[ \DR_{g}( \emptyset ) = (-1)^g \lambda_g. \]
In this case the conjecture specializes to the Katz--Klemm--Vafa formula
\[
\sum_{g = 0}^{\infty} \left\langle \lambda_g \right\rangle^{S}_{g,\beta} z^{2g-2} = \mathrm{Coeff}_{q^{\frac{1}{2} \beta^{2}}}
\left( \frac{1}{\Theta(z,\tau)^2 \Delta(\tau)} \right).
\]
proven in \cite{MPT}. 
Further evidence for the conjecture has been obtained in \cite{K3xP1}.

While the functions~$\varphi_m, \varphi_{m,n}$ were conjectured to be quasi-Jacobi forms (of explicit weight and index)
they have been left indeterminate in \cite{HilbK3, K3xE}. The goal of this paper
is simply to give an explicit formula for these functions and study their properties.

\subsection{A Kaneko--Zagier equation for Jacobi forms}
Let~$D_\tau=\frac{1}{2\pi i}\frac{d}{d\tau} = q\frac{d}{dq}$
and consider the ratio
$$F(z):=\frac{D_{\tau}^2\Theta(z)}{\Theta(z)} = -\sum_{n \geq 1}\sum_{d | n} (n/d)^3(p^{d/2}-p^{-d/2})^2 q^{n},$$
where, as we will often do, have dropped~$\tau$ from the argument.

We define formal series~$\phi_m \in \BQ[p^{\pm \frac{1}{2}}][[q]]$ for all~$m \in \BZ$ by the differential equation
\begin{align}
\label{eq:dif} D_{\tau}^2\phi_m = m^2F\phi_m, 
\end{align}
together with the constant term
\begin{equation} \phi_m = (p^{m/2}-p^{-m/2})+O(q). \label{eq:cst term} \end{equation}
Since the constant term of~$F$ in~$q$ vanishes, \eqref{eq:dif} determines the functions~$\phi_m$ uniquely from the initial data.
By definition, we have~$\phi_{-m} = -\phi_m$.

Our first main result is the following characterization of the functions~$\phi_m$.

\begin{thm} \label{thm:sol}
For all~$m \geq 0$ we have
$$\phi_m = \mathrm{Res}_{x=0} \left(\frac{\Theta(x+z)}{\Theta(x)}\right)^m.$$
In particular,~$\phi_m$ is a quasi-Jacobi form of weight~$-1$ and index~$|m|/2$ for every~$m$.
\end{thm}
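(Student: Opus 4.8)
The plan is to use the uniqueness already built into the definition: since \eqref{eq:dif} together with \eqref{eq:cst term} determines $\phi_m$ uniquely, it suffices to check that the candidate $\psi_m := \mathrm{Res}_{x=0}\bigl(\Theta(x+z)/\Theta(x)\bigr)^m$ satisfies $D_\tau^2\psi_m = m^2 F\psi_m$ and has constant term $p^{m/2}-p^{-m/2}$. Writing $G=G(x,z,\tau)=\Theta(x+z)/\Theta(x)$, so that $\psi_m=\mathrm{Res}_{x=0}G^m=\tfrac{1}{2\pi i}\oint G^m\,dx$, and using that $D_\tau$ and $\partial_z$ commute with the contour integral in $x$ while the residue of any total $x$-derivative vanishes, the whole problem becomes one of manipulating the single integrand $G^m$ modulo $\partial_x$-exact terms.

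First I would record the elementary identities $\partial_z G = J(x+z)\,G$ and $\partial_x G=\bigl(J(x+z)-J(x)\bigr)G$, where $J=\Theta'/\Theta$ and $'=\partial_z$. The decisive input is the heat equation for $\Theta$: since $\Theta$ equals $\theta_1/\eta^3$ up to normalization, it satisfies $D_\tau\Theta=\alpha\,\partial_z^2\Theta+\beta(\tau)\Theta$ for a constant $\alpha$ and a purely $\tau$-dependent $\beta$ (explicitly $\beta=-\tfrac18 E_2$, the anomaly from the $\eta^{-3}$). Consequently the logarithmic $\tau$-derivative is $L:=D_\tau\log\Theta=\alpha\,\Theta''/\Theta+\beta$, and the $z$-independent term $\beta$ cancels in every difference $L(x+z)-L(x)$. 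This produces the clean first-order relation $D_\tau(G^m)=\alpha\,m\,\bigl(P(x+z)-P(x)\bigr)G^m$ with $P:=\Theta''/\Theta$. Applying $D_\tau$ once more, re-expressing the result through $\partial_z G,\partial_x G$ and $P$, and discarding total $x$-derivatives, the aim is to reduce the integrand $D_\tau^2(G^m)$ to $m^2 F(z)\,G^m$ plus a total $x$-derivative; taking $\mathrm{Res}_{x=0}$ then yields the differential equation.

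I expect this reduction to be the main obstacle. In the base case $m=1$ there is nothing to do: $\psi_1=\mathrm{Res}_{x=0}\Theta(x+z)/\Theta(x)=\Theta(z)/\Theta'(0)=\Theta(z)$ since $\Theta(x)=x+O(x^3)$, and the equation is precisely the defining relation $F=D_\tau^2\Theta/\Theta$. For general $m$, however, the second application of $D_\tau$ gives $D_\tau^2(G^m)=\alpha m\bigl(D_\tau P(x+z)-D_\tau P(x)\bigr)G^m+\alpha^2 m^2\bigl(P(x+z)-P(x)\bigr)^2 G^m$ with $D_\tau P=\alpha(P''+2JP')$, and it is not visually apparent that the residue of this collapses onto $m^2 F(z)\psi_m$, because $P$ and $J$ appear evaluated at $x+z$ and at $x$ rather than at $z$. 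Exhibiting the required total $x$-derivative is where the real work lies: I would use the standard elliptic-function identities for $P=\Theta''/\Theta$ and $J=\Theta'/\Theta$ (equivalently $(\log\Theta)''=-\wp+\mathrm{const}$), integration by parts in $x$, and a bookkeeping of the pole orders at $x=0$, to force the cancellation.

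The constant term is comparatively routine. At $q=0$ one has $\Theta(z)=p^{1/2}-p^{-1/2}=2\sinh(z/2)$, whence $G|_{q=0}=\sinh((x+z)/2)/\sinh(x/2)=\cosh(z/2)+\coth(x/2)\sinh(z/2)$; expanding the $m$-th power binomially and using $\mathrm{Res}_{x=0}\coth(x/2)^k=2$ for odd $k$ and $0$ for even $k$ (a one-line computation after the substitution $u=e^x$), the surviving terms sum to $2\sinh(mz/2)=p^{m/2}-p^{-m/2}$, matching \eqref{eq:cst term}, so by uniqueness $\psi_m=\phi_m$. For the quasi-Jacobi assertion I would feed the modularity of $\Theta$ (weight $-1$, index $\tfrac12$) into the contour integral: after substituting $x\mapsto x/(c\tau+d)$, the automorphy factors of $\Theta(x+z)$ and $\Theta(x)$ cancel in $G$, the measure $dx$ contributes the overall $(c\tau+d)^{-1}$ (hence weight $-1$), and the Gaussians assemble into an index-$m/2$ factor $e^{(\cdots)z^2}$ times a residual $x$-dependent exponential $e^{(\cdots)zx}$ inside the integral, whose Taylor coefficients in $x$ generate precisely the lower-order corrections distinguishing a quasi-Jacobi form from an honest one. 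The elliptic transformations $z\mapsto z+\lambda\tau+\mu$ are handled the same way and give index $m/2$ as well.
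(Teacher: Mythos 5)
Your framework is sound in outline and several pieces are correct: reducing to uniqueness of the solution of \eqref{eq:dif} with constant term \eqref{eq:cst term} is exactly the right start (and is how the paper proceeds); your heat-equation bookkeeping (with $\alpha=\tfrac12$, $\beta=-\tfrac18 E_2$, giving $D_\tau(G^m)=\alpha m\,(P(x+z)-P(x))\,G^m$ for $P=\Theta''/\Theta$, and $D_\tau P=\alpha(P''+2JP')$) checks out; and your constant-term computation via $\mathrm{Res}_{x=0}\coth(x/2)^k=2$ for odd $k$ is correct and in fact more explicit than the paper, which omits it. But the proof has a hole exactly where the theorem lives: you never show that
\[
D_\tau^2(G^m)-m^2F(z)\,G^m
\]
has vanishing residue at $x=0$. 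You acknowledge this yourself ("I expect this reduction to be the main obstacle\dots exhibiting the required total $x$-derivative is where the real work lies: I would use\dots to force the cancellation"). That is a plan, not an argument: nothing in the proposal explains how the functions $P$ and $J$, evaluated at $x+z$ and at $x$, collapse after taking residues onto the single value $F(z)$, and the $m=1$ base case you verify carries no inductive mechanism to other $m$. As written, the central claim of the theorem is asserted, not proved.

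For comparison, the paper supplies precisely the missing mechanism, by a different route: it packages all $m$ into the generating series $g(y)=\sum_m \phi_m y^m/m$, uses Lagrange inversion ($x=g(y)\Leftrightarrow y=1/f(x)$) to convert the full family of ODEs into one functional identity \eqref{diffeq} for $f(x)=\Theta(x+z)/\Theta(x)$, and proves that identity by rigidity of quasi-Jacobi forms: the discrepancy $\CF(x,z)$ is a rank-two quasi-Jacobi form annihilated by $\frac{d}{dG_2}$ and $\frac{d}{dA(x)}$, hence quasi-periodic in $x$; a pole/zero count (holomorphy at $x=0$, vanishing to order $3$ at $x=-z$) shows $\CF/f^3$ is elliptic and holomorphic in $x$, hence constant, and the constant vanishes. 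Some structural input of this kind --- or a genuinely executed elliptic-function computation --- is what your "forced cancellation" requires. Separately, your last paragraph on the quasi-Jacobi property is also only a sketch: asserting that the residual $x$-dependent exponential "generates precisely the lower-order corrections" of a quasi-Jacobi form identifies no completion and proves nothing about weight or index. The paper instead deduces this from the structural identity $\phi_m=\sum_k h_k(\tau)\,D^k(\Theta^m)$ of Lemma~\ref{cor_main_thm}, which pins down the weight $-1$, the index $m/2$, and the anomaly $\frac{d}{dG_2}\phi_m=0$ in one stroke; you would need to either prove such an identity or carry out the contour-transformation argument in detail.
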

\vspace{7pt}

Consider the ratio of theta functions
\[ f(x) = \frac{\Theta(x+z)}{\Theta(x)} \]
whose appearance in mathematics goes back to work of Eisenstein \cite{Weil}.
Since its inverse has Taylor expansion~$1/f(x) = \Theta(z)^{-1} x + O(x^2)$,
the function~$1/f(x)$
can be formally inverted.
By Lagrange inversion, Theorem~\ref{thm:sol} then precisely says that the inverse series is the generating series of the~$\phi_m$:
\begin{equation} y = \frac{1}{f(x)} \ \Longleftrightarrow \ x = \sum_{m = 1}^{\infty} \frac{\phi_m}{m} y^m. \label{inversion} \end{equation}

Let us explain the connection of the differential equation \eqref{eq:dif} to a well-known differential equation for modular forms.
Recall the Eisenstein series
\[ E_k(\tau) = 1 - \frac{2k}{B_{k}} \sum_{n \geq 1} \sum_{d|n} d^{k-1} q^n, \]
where the weight~$k \geq 2$ is even and~$B_k$ are the Bernoulli numbers. Let
\[ \vartheta_k = D_{\tau} - \frac{k}{12} E_2(\tau) 
\]
be the Serre derivative which restricts to an operator
$\Mod_k \to \Mod_{k+2}$
on the space of modular forms of weight~$k$.
The Kaneko--Zagier equation \cite{KZ} is the differential equation
\begin{equation} \vartheta_{k+2} \vartheta_k f_k  = \frac{k (k+2)}{144} E_4(\tau) f_k. \label{KZdiff} \end{equation}
If~$k\equiv 0$ or~$4$ mod~$6$ it has non-trivial solutions which are modular forms of weight~$k$.
A direct calculation shows that a function~$f_k$ is a solution to \eqref{KZdiff}
if and only if~$g_{k+1} = f_k/\eta^{2k+2}$,
with~$\eta(\tau) = q^{1/24} \prod_{n \geq 1} (1-q^n)$ the Dedekind function,
is a solution of
\[ D_{\tau}^2 g_m = m^2 \frac{E_4(\tau)}{144} g_{m}. \]

We observe that the differential equation \eqref{eq:dif}
is a Jacobi-form analogue of the Kaneko--Zagier equation.
Even stronger, since \eqref{eq:dif} does not involve derivatives in the elliptic variable
we can specialize it to~$\frac{z}{2 \pi i}=a \tau + b$ for any~$a,b \in \BQ$ and in this way obtain an infinite family of Kaneko--Zagier type
differential equations with quasi-modular solutions\footnote{The solutions~$\phi_m$ restrict to modular forms (for a congruence subgroup) at~$\frac{z}{2 \pi i}=1/2$}.
The inversion formula \eqref{inversion} has the classical analogue \cite[Thm.~5(iv)]{KZ}
\[ x = \sum_{k \geq 1} \frac{f_{k-1}}{k} y^k \ \Longleftrightarrow \ y = \left( \frac{\wp'(x)}{-2} \right)^{-1/3}, \]
where the role of~$f(x)$ is played by the formal cube root of the derivative
$\wp'(x) = \frac{d}{dx} \wp(x)$ of the Weierstrass elliptic function~$\wp(x)$,
and the solutions~$f_k$ are normalized accordingly. 

We refer to Section~\ref{sec:KZequations} for a general construction of differential equations of Kaneko--Zagier type.

\subsection{Differential equation of the second kind}
We are also interested in a second family of functions, defined in terms of the~$\phi_m$ of the previous section.

Define formal series~$\phi_{m,n} \in \BQ[p^{\pm 1/2}][[q]]$ for all~$m,n \in \BZ$ by the differential equation
\begin{equation} D_{\tau} \phi_{m,n} = mn\phi_m\phi_n F+(D_{\tau}\phi_m)(D_{\tau}\phi_n) \label{defining_diff_eqn} \end{equation}
together with the condition that the constant term vanishes:
\[ \phi_{m,n} = O(q). \]
Since~$\phi_m$ is odd in~$m$, the definition implies the symmetries
\[ \forall m,n : \ \phi_{m,n} = \phi_{n,m} = \phi_{-m,-n}. \]
Moreover,~$\phi_{m,0} = 0$ as~$\phi_0=0$.
Our second main result describes the modular properties of~$\phi_{m,n}$:

\begin{thm} \label{Thm_phi_mn} For all~$m,n \in \BZ$ the difference
\[ \phi_{m,n} - |n| \delta_{m+n,0} \]
is a quasi-Jacobi form of weight~$0$ and index~$\frac{1}{2}( |m| + |n| )$.
\end{thm}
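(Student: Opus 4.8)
The plan is to read \eqref{defining_diff_eqn} as prescribing $\phi_{m,n}$ as a $D_\tau$-primitive of its right-hand side, and to show that the primitive can be taken inside the ring of quasi-Jacobi forms. First I would record that the right-hand side is \emph{already} a quasi-Jacobi form of weight $2$ and index $I:=\tfrac12(|m|+|n|)$: by Theorem~\ref{thm:sol} each $\phi_m$ is a quasi-Jacobi form of weight $-1$ and index $|m|/2$, the function $F=D_\tau^2\Theta/\Theta$ is a quasi-Jacobi form of weight $4$ and index $0$, and the ring of quasi-Jacobi forms is closed under $D_\tau$, which raises the weight by $2$ and preserves the index. Hence $D_\tau\phi_{m,n}$ lies in this ring with weight $2$ and index $I$, and the whole problem is to integrate it: to produce a weight-$0$ index-$I$ quasi-Jacobi form with the same $D_\tau$-image, and to identify the additive constant fixing the normalization $\phi_{m,n}=O(q)$.

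The engine for the integration is the Wronskian of the Kaneko--Zagier equation \eqref{eq:dif}. Setting $W_{m,n}:=(D_\tau\phi_m)\phi_n-\phi_m(D_\tau\phi_n)$ and using $D_\tau^2\phi_k=k^2F\phi_k$, one finds $D_\tau W_{m,n}=(m^2-n^2)F\phi_m\phi_n$. Thus whenever $m^2\neq n^2$ the term $F\phi_m\phi_n$ is an explicit $D_\tau$-derivative of a quasi-Jacobi form. Combining this with the identity $(D_\tau\phi_m)(D_\tau\phi_n)=D_\tau(\phi_n D_\tau\phi_m)-m^2F\phi_m\phi_n$ and eliminating $F\phi_m\phi_n$ yields the closed formula
\[ \phi_{m,n}=\frac{1}{m+n}\bigl(n\,\phi_n\,D_\tau\phi_m+m\,\phi_m\,D_\tau\phi_n\bigr)\qquad(m\neq\pm n), \]
which is manifestly a quasi-Jacobi form of weight $0$ and index $I$ with vanishing constant term; here no correction is needed, consistent with $\delta_{m+n,0}=0$. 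The diagonal case $m=n$ is even simpler, since the right-hand side of \eqref{defining_diff_eqn} then equals $D_\tau(\phi_m\,D_\tau\phi_m)$ outright.

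The only delicate case is the resonant one $n=-m$, where the Wronskian identity degenerates ($m^2-n^2=0$) and the formula above develops a pole at $m+n=0$. Using $D_\tau(\phi_m D_\tau\phi_m)=(D_\tau\phi_m)^2+m^2F\phi_m^2$ the right-hand side reduces to
\[ D_\tau\phi_{m,-m}=D_\tau\bigl(\phi_m\,D_\tau\phi_m\bigr)-2\,(D_\tau\phi_m)^2, \]
so everything comes down to integrating the single weight-$2$ index-$m$ form $(D_\tau\phi_m)^2$. This is the one place with a genuine obstruction: the cokernel of $D_\tau$ from weight-$0$ to weight-$2$ quasi-Jacobi forms of index $m$ is, as in the quasimodular case, measured by honest Jacobi forms, and one must check that the class of $(D_\tau\phi_m)^2$ there vanishes. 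I would verify this by constructing the primitive explicitly: starting from the residue formula of Theorem~\ref{thm:sol} for $D_\tau\phi_m$ and invoking the heat equation for $\Theta$ (which trades $D_\tau$ for $D_z^2$, turning $\tau$-derivatives into $z$-derivatives of the factor $\Theta(x+z)$ that are removable by integration by parts under $\mathrm{Res}_{x=0}$), one rewrites $(D_\tau\phi_m)^2$ as a manifest $D_\tau$-derivative of a quasi-Jacobi form.

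The main obstacle I anticipate is not the existence of this primitive but the determination of the additive constant. Since a nonzero constant has index $0$, it cannot be absorbed into an index-$m$ form, so the constant relating the quasi-Jacobi primitive to $\phi_{m,-m}$ is rigidly forced; the crux is to compute it and show it equals exactly $|n|=m$, which is precisely what produces the correction $|n|\delta_{m+n,0}$. Concretely this amounts to a careful evaluation of the $q^0$-coefficient of the primitive (equivalently, of the finite part of the pole of the off-resonant formula as $n\to-m$, where the numerator $n\phi_n D_\tau\phi_m+m\phi_m D_\tau\phi_n$ vanishes). Finally I would extend the conclusion to all signs of $m,n$ using the symmetries $\phi_{m,n}=\phi_{n,m}=\phi_{-m,-n}$ and $\phi_{m,0}=0$ recorded after \eqref{defining_diff_eqn}.
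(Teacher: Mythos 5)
Your treatment of the non-resonant cases is correct and agrees with the paper: the Wronskian identity $D_\tau\bigl((D_\tau\phi_m)\phi_n-\phi_m D_\tau\phi_n\bigr)=(m^2-n^2)F\phi_m\phi_n$ recovers exactly the closed formula \eqref{phimn_prop} (Proposition~\ref{prop:phimn}), and closure of $\QJac$ under $D_\tau$ then settles every case with $m\neq -n$. The paper gets the same formula by a one-line verification rather than via the Wronskian, but this is a cosmetic difference. The problem is the resonant case $m=-n$, which is the entire content of the theorem (the paper states explicitly that since quasi-Jacobi forms are not closed under integration, ``this case is not obvious at all''). Your reduction of that case to integrating the single form $(D_\tau\phi_m)^2$ is correct, as is your observation that one must both produce a primitive inside $\QJac_{0,m}$ and pin down a rigid additive constant. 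But at exactly this point the proposal stops being a proof: the claim that one can ``rewrite $(D_\tau\phi_m)^2$ as a manifest $D_\tau$-derivative of a quasi-Jacobi form'' by combining the residue formula with the heat equation and integration by parts is an unsupported hope, not an argument. Integration by parts under $\mathrm{Res}_{x=0}$ only shuffles $x$-derivatives; writing $(D_\tau\phi_m)^2=D_\tau(\phi_m D_\tau\phi_m)-\phi_m D_\tau^2\phi_m$ and similar manipulations just cycle back to the defining equation, and the heat equation gives no mechanism for exhibiting a \emph{product} of two $\tau$-derivatives of residues as a total $\tau$-derivative plus a constant. The unproved assertion about the cokernel of $D_\tau$ on quasi-Jacobi forms being ``measured by honest Jacobi forms'' is likewise not established anywhere, and even granting it, you give no way to show the class of $(D_\tau\phi_m)^2$ vanishes. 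Note also that the paper explicitly warns against the related limiting strategy you mention (extracting the finite part of \eqref{phimn_prop} as $n\to-m$): the limit involves the $u$-derivative $\phi'_{-n}$, which is generally \emph{not} a quasi-Jacobi form, so the resonant constant cannot be read off that way without further input.

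What actually closes this gap in the paper is a mechanism absent from your proposal: polynomiality in $m$ combined with \emph{two} independent expressions for the holomorphic anomaly $\frac{d}{dA}\phi_{m,n}$. Concretely, the paper proves the anomaly equation of Proposition~\ref{HAE_mnpositive} for $m,n>0$ (itself a substantial quasi-Jacobi identity, proved by the same elliptic-function argument as Theorem~\ref{thm:sol}), extends both that expression and the one obtained by differentiating \eqref{phimn_prop} to $m\le 0$ after subtracting the same non-polynomial correction $mz\,\phi_{m,n}\delta_{m<0}$ (using Corollary~\ref{Cor_polynomiality_Anomaly} and Lemma~\ref{Lemma_polynomiality}), concludes by polynomiality that the two expressions agree for all $m\neq -n$, and then specializes at $m=-n-1$: this yields an identity expressing $(n+1)\Theta\cdot(\phi_{-n,n}-n)$ as a quasi-Jacobi form of weight $-1$ and index $n+\tfrac12$ vanishing at $z=0$, whence divisibility by $\Theta$ in $\QJac$ gives $\phi_{-n,n}-n\in\QJac_{0,n}$. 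In particular, the constant $|n|$ is never computed as a $q^0$-coefficient of an explicit primitive; it emerges from the polynomial interpolation. Without a replacement for this (or some genuinely new construction of the primitive of $(D_\tau\phi_m)^2$), your outline does not prove the theorem in the only case that is at issue.
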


If~$m \neq -n$ the proof of Theorem~\ref{Thm_phi_mn} is easy. Indeed, in this case we have
\begin{equation} \phi_{m,n} = \frac{m}{m+n}\phi_mD_{\tau}(\phi_n) + \frac{n}{m+n}D_{\tau}(\phi_m)\phi_n \label{phimn_prop} \end{equation}
and since the algebra of quasi-Jacobi forms is closed under differentiation with respect to both~$z$ and~$\tau$
the result follows from Theorem~\ref{thm:sol}.
It hence remains to consider the case $m=-n$. However, since the algebra of quasi-Jacobi forms is not closed under integration,
this case is not obvious at all.

A key feature of the functions~$\phi_m$ is their polynomial dependence on~$m$. Precisely,
their Taylor expansion in the elliptic variable is of the form
\[ \phi_m = \sum_{k \geq 1} P_k(m) z^k \]
where each~$P_k$ is a polynomial in~$m$ of degree~$\leq k$ with coefficients quasi-modular forms.
This implies that the~$\phi_{m,n}$ depend polynomially on~$m,n$ as well.
Hence we are allowed to take the limit of the formula~\eqref{phimn_prop}. The result is
$$\phi_{n,-n} = D_\tau(\phi_n)\phi_{-n}+n(D_\tau(\phi'_{-n})\phi_n - \phi'_{-n}D_\tau\phi_n),$$
where~$\phi'_u$ is the formal derivative of~$\phi_u$ with respect to~$u$.
But, by inspection the function~$\phi'_n$ is usually not a quasi-Jacobi
and hence from this point it is still unclear why~$\phi_{n,-n}$ should be quasi-Jacobi.
Instead our proof of Theorem~\ref{Thm_phi_mn} relies on a subtle interplay between
holomorphic anomaly equations,
which measure the defect of~$\phi_m$ and~$\phi_{m,n}$ to be honest Jacobi forms, and the aforementioned polynomiality.

The holomorphic anomaly equations we derive are also of independent interest
since they determine the precise transformation behaviour of the
functions~$\varphi_{m}$ and~$\varphi_{m,n}$ under the Jacobi group.
As another indirect consequence of the proof of Theorem~\ref{Thm_phi_mn} we obtain a third, recursive characterization of the function~$\phi_m$:
\begin{prop} \label{prop:recursion}
For all~$m,n \geq 1$ we have
\[
\phi_{m+n}=\frac{1}{m}D_z(\phi_m)\phi_n+\frac{1}{n}\phi_mD_z(\phi_n)+\sum_{i+j=m}\frac{1}{i}\phi_{i,n}\phi_j + \sum_{i+j=n}\frac{1}{i}\phi_{i,m}\phi_{j}.
\]
\end{prop}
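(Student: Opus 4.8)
The plan is to obtain this recursion by extracting coefficients from the generating-function identity~\eqref{inversion}, differentiated appropriately. Recall that if $y = 1/f(x)$ then $x = \sum_{m \geq 1} \frac{\phi_m}{m} y^m$, where $f(x) = \Theta(x+z)/\Theta(x)$. The key observation is that both the $\phi_m$ and the $\phi_{m,n}$ arise as coefficients in natural expansions of powers of $f$ and their derivatives. By Theorem~\ref{thm:sol} we have $\phi_m = \Res_{x=0} f(x)^m$, and I expect a parallel residue formula $\phi_{m,n} = \Res_{x=0}\big( f(x)^m \cdot (\text{something built from } D_\tau \log f \text{ or } D_z \log f) \big)$ to fall out of the defining differential equation~\eqref{defining_diff_eqn}; establishing such a formula is the natural first step. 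Since $F = D_\tau^2 \Theta/\Theta$ and $\Theta$ appears in $f$ only through the combination $\Theta(x+z)/\Theta(x)$, the quantity $mn \phi_m \phi_n F + (D_\tau \phi_m)(D_\tau \phi_n)$ should reorganize into a single residue of a product of two copies of $f$ coupled by a $D_\tau$-derivative kernel.

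\smallskip

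Granting such residue descriptions, the second step is to apply a $D_z$-derivative to the identity $\phi_{m+n} = \Res_{x=0} f(x)^{m+n}$ and split the exponent. Writing $f^{m+n} = f^m \cdot f^n$ and using $D_z \log f(x) = D_z \log \Theta(x+z)$, the Leibniz rule gives
\[
D_z\big( f^{m} f^{n} \big) = m f^{m} f^{n} (D_z \log f) + n f^{m} f^{n}(D_z \log f),
\]
and a careful residue manipulation—integrating by parts in $x$ against the $1/f$ expansion—should convert the two leading terms into $\frac{1}{m} D_z(\phi_m)\phi_n + \frac{1}{n}\phi_m D_z(\phi_n)$, while the mixing between the two theta factors produces the two convolution sums $\sum_{i+j=m} \frac{1}{i}\phi_{i,n}\phi_j$ and $\sum_{i+j=n}\frac{1}{i}\phi_{i,m}\phi_j$. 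The precise bookkeeping here is where the indices $(i,n)$ and $(i,m)$ on the $\phi_{i,n}, \phi_{i,m}$ enter: they encode the coupling of a degree-$i$ piece of one theta factor with the full degree-$n$ (respectively degree-$m$) other factor, exactly matching the residue formula for $\phi_{i,n}$ anticipated in the first step.

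\smallskip

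The main obstacle I anticipate is the bookkeeping in the second step: matching the convolution structure coming from Lagrange inversion (where $x$ is expressed as a power series in $y = 1/f$) against the product structure coming from $f^{m+n} = f^m f^n$. Concretely, one must pass between expansions in the variable $x$ (natural for residues of $f$) and expansions in $y$ (natural for~\eqref{inversion}), and track how a $D_z$-derivative acts across this change of variables; the noncommutativity of $D_z$ with the Lagrange inversion is what generates the $\phi_{i,n}$ rather than mere products $\phi_i \phi_n$. An alternative, and perhaps cleaner, route avoids generating functions entirely: combine the Taylor-coefficient polynomiality $\phi_m = \sum_{k} P_k(m) z^k$ (with $P_k$ of degree $\leq k$ in $m$) with the differential equations~\eqref{eq:dif} and~\eqref{defining_diff_eqn} to verify the claimed identity coefficient-by-coefficient in $z$; since both sides are quasi-Jacobi forms of matching weight and index by Theorems~\ref{thm:sol} and~\ref{Thm_phi_mn}, it suffices to check the recursion on finitely many Taylor coefficients, reducing the problem to a polynomial identity in $m,n$ that can be dispatched by the polynomial dependence already established. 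I would pursue the residue approach first and fall back on this second method for the steps that resist direct manipulation.
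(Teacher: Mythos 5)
Your proposal has genuine gaps at exactly the points where the difficulty lies, in both the primary route and the fallback. In the primary route, the entire argument rests on a residue formula for $\phi_{m,n}$ that you never state precisely, let alone prove ("I expect\dots to fall out"). No such simple formula is available: $\phi_{m,n}$ is defined by integrating \eqref{defining_diff_eqn} in $\tau$ with vanishing constant term, and the only generating-function encoding one gets from this is
\[
(D_x+D_y)\sum_{m,n\geq 1}\frac{\phi_{m,n}}{m}\,x^m y^n \;=\; D_y h(x,y), \qquad h(x,y)=D_xg(x)\,D_\tau g(y)+D_\tau g(x)\,D_yg(y),
\]
so that one must invert the operator $D_x+D_y$ rather than take a residue of a single power of $f$. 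Moreover, the Leibniz computation you display is vacuous: your two terms are literally identical, and $D_z(f^m f^n)=(m+n)f^{m+n}D_z\log f$ contains no coupling whatsoever between an ``$m$-factor'' and an ``$n$-factor,'' so the mechanism you claim produces the convolution sums $\sum_{i+j=m}\frac1i\phi_{i,n}\phi_j$ simply is not present in that formula. The paper's actual proof (inside the proof of Proposition~\ref{HAE_mnpositive}) does start from generating functions and the Lagrange-inversion change of variables $x=1/f(\tilde x)$, $y=1/f(\tilde y)$, as you intuit; but after that reduction the recursion \eqref{eq:rec2} becomes the explicit two-point identity \eqref{eqn_to_vanish} among $f(x)$, $f(y)$ and their $D_x$, $D_y$, $D_z$, $D_\tau$ derivatives, and proving that identity is the real work: one checks the left side is a rank-three quasi-Jacobi form killed by $\tfrac{d}{dG_2}$, $\tfrac{d}{dA(x)}$, $\tfrac{d}{dA(y)}$, with no pole at $y=0$ and a triple zero at $y=-z$, so that dividing by $f(x)^3f(y)^3$ yields an elliptic holomorphic function, hence a constant, which vanishes. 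This rigidity argument is the essential content of the proof and is entirely absent from your sketch.

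The fallback is also not a proof. Finite-dimensionality of $\QJac_{-1,(m+n)/2}$ does reduce the identity \emph{for a fixed pair} $(m,n)$ to finitely many Taylor coefficients, but the number of coefficients needed grows with the index $\tfrac{m+n}{2}$; there is no uniform bound, so the statement for all $m,n\geq 1$ is not reduced to a finite check. Dually, for each fixed $k$ the $z^k$-coefficient of (left side minus right side) is a polynomial in $(m,n)$ with quasi-modular coefficients (this much does follow from Propositions~\ref{prop_polynomiality} and~\ref{prop:polymn} together with Faulhaber-type summation of the convolution terms), but polynomiality alone cannot ``dispatch'' anything: to conclude that this polynomial vanishes you must verify the identity on a Zariski-dense set of pairs $(m,n)$, for every $k$, and your proposal supplies no independent way to establish the identity on even one infinite family. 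Note how the paper uses polynomiality (Lemma~\ref{Lemma_polynomiality}): only to \emph{extrapolate} identities already proven for all positive $(m,n)$ by the theta-function argument to negative indices. It is an extension tool, never a substitute for the base case, which is what your argument is missing.
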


We finally relate the functions~$\varphi_m$ and~$\varphi_{m,n}$ to the geometry of K3 surfaces.

\begin{conj}
The functions~$\varphi_{m}$ and~$\varphi_{m,n}$ as defined above are the functions appearing in Conjecture~\ref{conj:K3DR}.
\end{conj}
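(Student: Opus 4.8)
The plan is to prove the statement conditionally on the structural part of Conjecture~\ref{conj:K3DR}: assuming the double ramification integrals admit the stated Wick-type expansion with \emph{some} quasi-Jacobi forms $\varphi_m,\varphi_{m,n}$, I would show that these functions are forced to coincide with the explicit solutions $\phi_m,\phi_{m,n}$ of \eqref{eq:dif} and \eqref{defining_diff_eqn}. The first step is to observe that the functions in Conjecture~\ref{conj:K3DR} are uniquely determined by the invariants on the left-hand side. Indeed, specializing to $n=2$ marked points with $\mathsf{a}=(m,-m)$, the right-hand side reads
\[
\frac{1}{m^{\deg\gamma_1}(-m)^{\deg\gamma_2}}\,\mathrm{Coeff}_{q^{\frac12\beta^2}}\!\left(\frac{1}{\Theta^2\Delta}\Big[(\gamma_1,\gamma_2)\,\varphi_{m,-m}+(\gamma_1,\beta)(\gamma_2,\beta)\,\varphi_m\varphi_{-m}\Big]\right),
\]
so choosing $\gamma_1,\gamma_2$ orthogonal to $\beta$ isolates $\varphi_{m,-m}$ while a complementary choice (with $(\gamma_1,\gamma_2)=0$ but $(\gamma_i,\beta)\neq0$) isolates $\varphi_m\varphi_{-m}$, and hence $\varphi_m$ up to the sign $\varphi_{-m}=-\varphi_m$. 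Thus it suffices to identify these specific one- and two-point generating series of K3 invariants with $\phi_m$ and $\phi_{m,n}$, and by the uniqueness noted after \eqref{eq:cst term} it is enough to show that the geometric series satisfy the defining differential equations together with the correct $q^0$ terms.

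Matching the initial data is the elementary part: the $q^0$ coefficient of $\varphi_m^{\mathrm{geom}}$ records only the classical (constant-map / fiber) contribution on the K3, which reproduces the normalization $\varphi_m=(p^{m/2}-p^{-m/2})+O(q)$ of \eqref{eq:cst term}, while the two-point series has vanishing constant term, matching $\phi_{m,n}=O(q)$. The substantive step is to derive the differential equations \eqref{eq:dif} and \eqref{defining_diff_eqn} on the geometric side. Here I would pass through the K3$\times E$ model of \cite{K3xE}, in which $q$ tracks the degree over the elliptic curve $E$, the operator $D_\tau$ is realized by insertion of the fiber class, and the building block $F=D_\tau^2\Theta/\Theta$ is the corresponding fiber two-point series governed by the Katz--Klemm--Vafa factor $1/(\Theta^2\Delta)$ proven in \cite{MPT}. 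The coefficient $m^2$ in \eqref{eq:dif} should arise from the elliptic degree $m$ attached to $\varphi_m$, so that \eqref{eq:dif} becomes a second-order relation in the fiber direction, while \eqref{defining_diff_eqn} is its two-point companion expressing the propagator $\varphi_{m,n}$ through the one-point functions and their $D_\tau$-derivatives.

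Rather than deriving this second-order relation head-on, I would use the holomorphic anomaly equations as the bridge. The modularity results of \cite{K3xE} show that the geometric $\varphi_m^{\mathrm{geom}},\varphi_{m,n}^{\mathrm{geom}}$ are quasi-Jacobi forms of the weights and indices predicted in Theorem~\ref{thm:sol} and Theorem~\ref{Thm_phi_mn}, and their failure to be honest Jacobi forms is measured by anomaly equations coming from the degeneration of $E$. The plan is to show that these geometric anomaly equations agree with the ones the present paper derives for $\phi_m,\phi_{m,n}$, and then to invoke that a quasi-Jacobi form of fixed weight and index satisfying the given anomaly equation with prescribed $q^0$ term is uniquely determined, so that $\varphi_m^{\mathrm{geom}}=\phi_m$ and $\varphi_{m,n}^{\mathrm{geom}}=\phi_{m,n}$. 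The main obstacle is precisely the unconditional derivation of the Kaneko--Zagier equation \eqref{eq:dif} (equivalently, the matching anomaly equation) from the geometry: this requires a second-order relation in the Gromov--Witten theory of K3 surfaces, or in the quantum cohomology of $\Hilb(K3)$, that is not presently available in closed form, which is why the statement is recorded as a conjecture. A realistic intermediate goal, exploiting the finite dimensionality of the relevant spaces of quasi-Jacobi forms, is to verify the identification for all $m$ up to a given bound by computing finitely many low-genus invariants, thereby confirming the conjecture in an unbounded but finite range.
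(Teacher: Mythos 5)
You have set out to prove a statement that the paper itself records only as a conjecture: the paper offers no proof of it, only consistency checks (the polynomiality of the $\phi$'s in $m$ matching the polynomiality of $\DR_g(\mathsf{a})$ in the ramification profile \cite{DR}, quasi-modularity of the $z$-coefficients matching \cite{MPT}, and the quasi-Jacobi property with holomorphic anomaly equations matching the expectations of \cite{HilbK3, HAE}), together with numerical evidence. Your proposal, read carefully, also proves nothing: it reduces the conjecture to the assertion that the geometric one- and two-point series satisfy the differential equations \eqref{eq:dif} and \eqref{defining_diff_eqn}, and you concede in your own words that this step ``is not presently available.'' That concession is precisely the content of the conjecture. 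The isolation step at the start (specializing to $\mathsf{a}=(m,-m)$ and choosing $\gamma_1,\gamma_2$ orthogonal, respectively non-orthogonal, to $\beta$, and varying $\beta^2$ over primitive classes to recover all $q$-coefficients) is fine as far as it goes, and correctly shows the functions in Conjecture~\ref{conj:K3DR} are unique \emph{if} they exist; but uniqueness plus the normalization \eqref{eq:cst term} only transfers the problem to establishing the ODEs geometrically, which is the entire difficulty.

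Two further points deserve correction. First, you attribute to \cite{K3xE} a proof that the geometric series are quasi-Jacobi forms of the predicted weight and index; in fact the paper states explicitly that in \cite{HilbK3, K3xE} this was \emph{conjectured} and the functions left indeterminate, so your ``bridge'' via matching holomorphic anomaly equations rests on an unproven modularity input as well (the unconditional results of \cite{K3xP1} cover only part of the relevant theory). Second, your closing suggestion --- that finite dimensionality of spaces of quasi-Jacobi forms permits verification for all $m$ up to a bound from finitely many invariants --- is sound only under that same unproven hypothesis that the geometric series lie in $\QJac$ with the stated weight and index; without it, agreement of finitely many Fourier coefficients establishes nothing. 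So the honest summary is: your proposal is a reasonable research program, consistent in spirit with how the authors frame the evidence, but it contains a genuine and self-acknowledged gap (a second-order relation in the reduced Gromov--Witten theory of K3 surfaces, or in the quantum cohomology of the Hilbert scheme, yielding \eqref{eq:dif}), and it cannot be counted as a proof, conditional or otherwise, beyond the uniqueness observation.
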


Besides plenty of numerical evidence which is known for \cref{conj:K3DR}, there are several qualitative features of the functions~$\phi$
which correspond to similar features in Gromov--Witten theory.
The polynomial dependence of the~$\phi$'s is reflected in the polynomial
dependence of the double ramification cycle on the ramification profiles \cite{DR}.
In their Taylor expansions the~$z$-coefficients of the~$\phi$'s are quasi-modular forms.
This matches a result of \cite{MPT}.
The quasi-Jacobi form property and the holomorphic anomaly equations are expected from holomorphic-symplectic geometry \cite{HilbK3} and the results of \cite{HAE}.

Conjecture~\ref{conj:K3DR} yields an explicit formula for the Gromov--Witten theory of~$K3 \times \p^1$ relative to two fibers over~$\p^1$.
In terms of this theory, efficient algorithms to determine the Gromov--Witten invariants of all CHL Calabi--Yau threefolds are known \cite{BO-CHL}. This leads to deep relations between counting on K3 surfaces and Conway moonshine.
We hope to come back to these questions in future work.

\subsection{Acknowledgements}
We thank A.~Oblomkov for pointing out the polynomial dependence in the~$\varphi_m$,
and R.~Pandharipande for 
the many things we learned from him about
the Gromov--Witten theory of K3 surfaces.

The second author was partially funded by the DFG grant OB 512/1-1. The third author was partially supported by NSF grant 1807079.

\section{Preliminaries}
\subsection{Quasi-modular forms}
For all even~$k > 0$ consider the renormalized Eisenstein series
\[ G_k(\tau) = - \frac{B_k}{2 \cdot k} + \sum_{n \geq 1} \sum_{d|n} d^{k-1} q^n. \]

The~$\BC$-algebras~$\Mod = \oplus_k \Mod_k$ and~$\QMod = \oplus_k \QMod_k$
of modular and quasi-modular forms can be described by Eisenstein series:
\[ \Mod = \BQ[G_4, G_6], \quad \QMod = \BQ[G_2, G_4, G_6]. \]
The algebra~$\QMod$ is acted on by both~$D_{\tau} = q \frac{d}{dq}$
and the operator~$\frac{d}{dG_2}$ which takes the formal derivative in~$G_2$
when a quasi-modular forms is written as a polynomial in~$G_2, G_4, G_6$.
Let also~$\wt$ be the operator on~$\QMod$ that acts on~$\QMod_k$ by multiplication by~$k$.
We have the~$\mathrm{sl}_2$-commutation relation
\[ \left[ \frac{d}{dG_2}, D_{\tau} \right] = -2 \wt. \]

\subsection{Theta functions}
Let~$z \in \BC$ and~$p = e^{z}$. Let
\[
\vartheta_1(z,\tau) = \sum_{\nu\in \mathbb{Z}+\frac{1}{2}} (-1)^{\lfloor \nu \rfloor} p^\nu q^{\nu^2/2}
\]
be the odd Jacobi theta function.\footnote{The Jacobi function~$\vartheta_1$ defines
the unique section on the elliptic curve~$\BC_{w}/(\BZ+\tau \BZ)$ which vanishes at the origin.
In our convention the variable~$w$ of the complex plane~$\BC_w$ is related to~$z$ by~$z=2 \pi i w$.
In other words, the fundamental region of the curve is given by~$\frac{z}{2\pi i} \in \{ a + b \tau \mid a,b \in [0,1] \}$. 
} 
By the Jacobi triple product we have
\[ \Theta(z) = \vartheta_1(z,\tau) / \eta^3(\tau). \]
The product formula for~$\Theta$ yields also the expansion
\begin{align}
\Theta(z) 
&=  z \exp\left(-2\sum_{k\geq 2} G_k \frac{z^k}{k!}\right).
\label{theta_z_expansion}
\end{align}

\subsection{Quasi-Jacobi forms} \label{subsec:QuasiJacobi}
Jacobi forms are a generalization of classical modular forms
which depend on an elliptic parameter~$z \in \BC$ and a modular parameter~$\tau \in \BH$, see \cite{EZ} for an introduction.
Quasi-Jacobi forms are constant terms of almost holomorphic Jacobi forms.
Following \cite{Lib} and \cite[Sec.1]{OPix2} we shortly recall the definition.

Consider the real-analytic functions
\[ \nu = \frac{1}{8 \pi \Im(\tau)}, \quad \alpha = \frac{\Im(z/2\pi i)}{\Im(\tau)}. \]
An almost holomorphic function on~$\BC \times \BH$ is a function of the form
\[ \Psi = \sum_{i, j \geq 0} \psi_{i,j}(z,\tau) \nu^i \alpha^j \]
such that each of the finitely many non-zero~$\psi_{i,j}$ is holomorphic
and admits a Fourier expansion of the form~$\sum_{n \geq 0} \sum_{r \in \BZ} c(n,r)q^n p^r$ in the region~$|q|<1$.
An almost holomorphic weak Jacobi form of weight~$k$ and index~$m \in \BZ$
is an almost holomorphic function on~$\BC\times \BH$ which satisfies the transformations laws of Jacobi forms of this weight and index \cite{EZ}.
A quasi-Jacobi form of weight~$k$ and index~$m$ is a function~$\psi(z,\tau)$
such that there exists an almost holomorphic weak Jacobi form~$\sum_{i,j} \psi_{i,j} \nu^i \alpha^j$ with~$\psi_{0,0} = \psi$.

In this paper we will also work with quasi-Jacobi forms of half-integral index $\frac{m}{2} \in \frac{1}{2} \BZ$.
These are defined identical as above except that we include (in the usual way) a character in the required transformation law.
The character we use for index $m/2$ is defined by the transformation properties of $\Theta^m(z)$ under the Jacobi group.\footnote{This character is essentially uniquely determined by requiring that the square of a half-integral weight Jacobi form is a Jacobi form without character, see for example the discussion in \cite{GSZ}.}
In particular, $\Theta(z)$ is a (quasi) Jacobi form of weight~$-1$ and index~$1/2$;
its square $\Theta(z)^2$ is a Jacobi form without character.

The algebra of quasi-Jacobi forms is bigraded by weight~$k$ and index~$m$:
\[ \QJac = \bigoplus_{k} \bigoplus_{m \in \frac{1}{2}\BZ} \QJac_{k,m}. \]
In index~$0$ we recover the algebra of quasi-modular forms:~$\QJac_{k,0} = \QMod_k$.

Similar to the case of quasi-modular forms, the algebra of quasi-Jacobi forms can be embedded in a polynomial algebra.
Let
$D_z = \frac{d}{dz} = p \frac{d}{dp}$
and consider the series
\begin{align*}
\A(z) = \frac{D_z \Theta(z)}{\Theta(z)}
& = - \frac{1}{2} - \sum_{m \neq 0} \frac{p^m}{1-q^m}
\end{align*}
and the Weierstra{\ss} elliptic function
\[
\wp(z,\tau) = \frac{1}{12} + \frac{p}{(1-p)^2} + \sum_{d \geq 1} \sum_{k | d} k (p^k - 2 + p^{-k}) q^{d}.
\]
We write~$\wp'(z,\tau) = D_z \wp(z,\tau)$ for its derivative with respect to~$z$.
Since taking the derivative with respect to~$z$ and~$\tau$ preserves the algebra of quasi-Jacobi forms (\cite{OPix2})
it is easy to see that all of these are (meromorphic) quasi-Jacobi forms.

\begin{prop} 
The algebra~$\CR = \BC[\Theta, \A, G_2, \wp, \wp', G_4]$ is a free polynomial ring,
and~$\QJac$ is equal to the subring of all polynomials which define holomorphic functions~$\BC \times \BH \to \BH$.
\end{prop}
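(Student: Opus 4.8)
The plan is to treat the two assertions separately: the algebraic independence of the six generators, and the identification of $\QJac$ with the holomorphic elements of $\CR$. For the latter, one inclusion is immediate. Each of $\Theta,\A,\wp,\wp',G_2,G_4$ is a meromorphic quasi-Jacobi form whose only poles lie along the lattice $z \in 2\pi i(\BZ + \BZ\tau)$, and meromorphic quasi-Jacobi forms are closed under sums and products; hence every polynomial in the generators is a meromorphic quasi-Jacobi form, and (decomposing into bihomogeneous pieces, each of which is again holomorphic when the total is) those defining holomorphic functions lie in $\QJac$. The real content is therefore the freeness together with the reverse inclusion $\QJac \subseteq \CR$.

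For freeness I would use the two holomorphic-anomaly derivations on $\QJac$ obtained by differentiating the almost-holomorphic completion in the real-analytic variables $\nu$ and $\alpha$; denote them $\frac{d}{dG_2}$ and $\frac{d}{d\A}$. From the theory of \cite{OPix2, Lib} these are derivations preserving the index and lowering the weight by $2$ and $1$ respectively, with $\frac{d}{dG_2}$ sending $G_2$ to a nonzero constant and annihilating $\Theta,\A,\wp,\wp',G_4$, and $\frac{d}{d\A}$ sending $\A$ to a nonzero constant and annihilating $\Theta,G_2,\wp,\wp',G_4$. Given a polynomial relation among the generators, the index grading $\QJac = \bigoplus_m \QJac_{\ast,m}$ lets me split it according to the power of $\Theta$ (the unique generator of nonzero index), reducing to the index-$0$ generators $\A,G_2,\wp,\wp',G_4$. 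Since $\frac{d}{d\A}$ acts on polynomials in these as a nonzero multiple of $\partial/\partial\A$, iterating it eliminates $\A$, and $\frac{d}{dG_2}$ then eliminates $G_2$, leaving the genuine forms $\wp,\wp',G_4$. Their independence is a transcendence-degree count: $G_4,G_6$ are algebraically independent and $\wp$ is transcendental over the field of modular functions, so $\BC(\wp,G_4,G_6)$ has transcendence degree $3$; the Weierstrass relation $(\wp')^2 = 4\wp^3 - aG_4\wp - bG_6$ writes $G_6$ as a polynomial in $\wp,\wp',G_4$, whence $\BC(\wp,\wp',G_4)$ contains this field and, being generated by three elements, also has transcendence degree $3$, forcing $\wp,\wp',G_4$ to be algebraically independent. (The same relation is what makes $G_6$ redundant and explains its absence from the list.)

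For $\QJac \subseteq \CR$ I would pass to almost-holomorphic completions. Writing $\widehat{G_2} = G_2 + c\,\nu$ and $\widehat\A = \A + c'\alpha$ for the completions of the two quasi generators (the remaining four being already genuine Jacobi or modular forms), it suffices to prove the structure theorem for genuine almost-holomorphic weak Jacobi forms: every such form is a polynomial in $\widehat{G_2},\widehat\A,\Theta,\wp,\wp',G_4$. Indeed, taking the constant term in $\nu,\alpha$ is a ring homomorphism sending $\widehat{G_2}\mapsto G_2$, $\widehat\A\mapsto\A$ and fixing the others, so it recovers any quasi-Jacobi form $\psi$ from its completion $\widehat\psi$ as the corresponding polynomial in $\CR$. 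The structure theorem I would prove by induction on the index: dividing an index-$\frac{m}{2}$ form by $\Theta^{m}$ — whose only zeros, all simple, lie on the lattice — yields an index-$0$ almost-holomorphic Jacobi form with poles only at $z=0$, and such index-$0$ forms are built from $\wp,\wp'$ (an elliptic function with poles only at the origin is a polynomial in $\wp$ and $\wp'$), the almost-holomorphic modular forms $\BC[\widehat{G_2},G_4,G_6]$, and the single quasi-elliptic generator $\widehat\A$, with $G_6$ reabsorbed via Weierstrass.

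The main obstacle is this index-$0$ classification. One must show that, after clearing the elliptic part by a polynomial in $\wp,\wp'$ and the modular part by $\widehat{G_2},G_4,G_6$, the remaining quasi-elliptic freedom is accounted for by exactly the one generator $\widehat\A$ — i.e.\ that the common kernel of the two anomaly derivations is precisely the genuine forms, namely the holomorphic elements of $\BC[\Theta,\wp,\wp',G_4]$, and that $\QJac$ is free over these in the directions $G_2$ and $\A$ — all while tracking the poles created at $z=0$ so that only nonnegative powers of $\Theta$ appear and no division by $\Theta$ is required. Verifying that these two anomaly directions are independent and exhausted by the two extra generators, with no hidden relation or missing generator, is where the essential work lies; the remaining steps are classical (elliptic and modular function theory) or formal (the completion and constant-term argument).
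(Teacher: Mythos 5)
Your reduction is exactly the one the paper uses: the forward inclusion is immediate, and for the converse you divide a quasi-Jacobi form of index $\frac{m}{2}$ by $\Theta^{m}$ to reduce everything to the index-$0$ case. The difference lies in how that index-$0$ case is settled. The paper does not prove it at all: its entire proof is this reduction followed by a citation to \cite[Sec.\ 2]{Lib}, where the structure of index-$0$ quasi-Jacobi forms with poles only on the lattice is established --- elliptic part generated by $\wp,\wp'$, modular part by $G_4,G_6$ (with $G_6$ absorbed via the Weierstrass relation), and exactly the two quasi directions $\A$ and $G_2$, no more and no fewer. So the step you flag as ``where the essential work lies'' and leave open is precisely the content of that reference; your sketch of it (clear the pole at $z=0$ by a polynomial in $\wp,\wp'$ with modular coefficients, then account for the remaining freedom by $\A$ and $\widehat{G_2}$) has the right shape, but as a self-contained argument your proposal is incomplete at exactly this point, and it is the genuinely nontrivial point of the proposition: one must show the common kernel of the two anomaly derivations consists precisely of the genuine forms, i.e.\ that there is no hidden quasi generator. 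With the reference invoked, your proof closes.

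Two further remarks. First, your freeness argument --- strip off $\Theta$ using the index grading, eliminate $\A$ and then $G_2$ by iterating the derivations $\frac{d}{d\A}$ and $\frac{d}{dG_2}$, then a transcendence-degree count for $\wp,\wp',G_4$ using the Weierstrass relation to recover $G_6$ --- is correct, and it is more than the paper writes down (freeness is silently folded into the same citation). Note that applying a derivation to a polynomial relation $P(\text{generators})=0$ yields the relation $\frac{\partial P}{\partial \A}(\text{generators})=0$ for any chosen polynomial representative, so the elimination does not presuppose the independence it is trying to prove; just be explicit that all of this happens in the algebra of meromorphic quasi-Jacobi forms, where the bigrading and the two derivations are well defined by uniqueness of the almost-holomorphic completion. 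Second, your observation that taking the constant term in $\nu,\alpha$ is a ring homomorphism is the correct formal bridge between the almost-holomorphic statement and the quasi-Jacobi one; the paper uses it implicitly, and also records (in the remark following the proposition) the same mechanism you propose for the meromorphic index-$0$ classification: subtract a polynomial in $\wp,\wp'$ with modular coefficients so that the result is holomorphic and elliptic, hence constant.
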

\begin{proof}
It is immediate that if~$f \in \CR$ is holomorphic, then it is a quasi-Jacobi form.
Conversely, divide any quasi-Jacobi form of index~$m/2$ by~$\Theta^{m}$.
The result then follows from \cite[Sec.\ 2]{Lib}.
\end{proof}

\begin{remark}
The algebra~$\CR$ is the algebra of all meromorphic quasi-Jacobi forms with the property that all poles are at the lattice points $z=m+n\tau$ with $m,n\in \z$. Indeed, since~$\Theta, \A$ and $G_2$ lie in~$\CR$, it suffices to show that meromorphic Jacobi forms of index~$0$ with the latter property are elements of~$\CR$. For such a Jacobi form there exists a polynomial in $\wp$ and $\wp'$ with modular coefficients such that the sum is holomorphic and elliptic, hence constant. Therefore, every such meromorphic Jacobi form lies in~$\CR$. \end{remark}

The weight and index of the generators of~$\CR$ are given as follows:
$$\begin{array}{lll} 
\text{Generator} & \text{Weight} & \text{Index} \\\hline
\Theta & -1 & 1/2 \\
\A & 1  & 0\\
G_2 & 2 & 0\\
\wp & 2 & 0\\
\wp' & 3 & 0\\
G_4 & 4 & 0.
\end{array}$$

Consider the formal derivative operators~$\frac{d}{d\A}$ and~$\frac{d}{d G_2}$.
Let~$\mathrm{wt}$ and~$\mathrm{ind}$ be the operators which act on~$\QJac_{k,m}$ by multiplication by the weight~$k$ and the index~$m$ respectively.
By \cite[(12)]{OPix2} we have the commutation relations:
\begin{equation} \label{eq:comm relations 1}
\begin{alignedat}{2}
\left[ \frac{d}{dG_2}, D_{\tau} \right] & = -2 \mathrm{wt}, & \qquad \qquad \left[\frac{d}{d \A}, D_z \right] & = 2 \mathrm{ind} \\
\left[\frac{d}{dG_2}, D_z \right] & = -2\frac{d}{dA}, &\qquad  \left[\frac{d}{dA}, D_{\tau}\right] & = D_z.
\end{alignedat}
\end{equation}

The almost-holomorphic Jacobi forms completing~$\A$ and~$G_2$ are given by
\begin{equation} \widehat{\A} = \A + \alpha, \quad \widehat{G_2} = G_2 + \nu. \label{AhatG2hat} \end{equation}
Moreover all other generators of $\CR$ are (meromorphic) Jacobi forms.
Hence the formal derivatives~$\frac{d}{d \A}$ and~$\frac{d}{d G_2}$ of a quasi-Jacobi form measure
the dependence of its completion on the non-holomorphic variables~$\alpha$ and~$\nu$,
or in other words the failure of a quasi-Jacobi forms to be an honest Jacobi forms.
For a quasi-Jacobi form we call~$\frac{d}{dA} \psi$ its holomorphic anomaly.
An equation of the form ($\frac{d}{dA} \psi = \ldots$) will be called a holomorphic anomaly equation.
Similar definitions apply to~$\frac{d}{dG_2}$.

As explained in \cite{OPix2} knowing the holomorphic-anomaly equations of a quasi-Jacobi form
is equivalent to knowing their transformation properties unter the Jacobi group.
Concretely, we have the following (the case of half-integral index is similar):
\begin{lemma}[ \cite{OPix2} ]
Let~$\psi(z,\tau) \in \QJac_{k,m}$ with~$m \in \BZ$. Then
\begin{gather*}
\psi\left( \frac{z}{c \tau + d}, \frac{a \tau + b}{c \tau + d} \right) 
= (c \tau + d)^{k} e\left( \frac{c m (z/2 \pi i)^2}{c \tau + d} \right)
\exp\left( - \frac{c \frac{d}{dG_2}}{4 \pi i (c \tau + d)}  + \frac{c \frac{z}{2 \pi i} \frac{d}{dA}}{c \tau+d} \right) \psi(z,\tau) \\
\psi(z + 2 \pi i(\lambda \tau + \mu), \tau)
= e\left( - \lambda^t L \lambda \tau - 2 \lambda^t L \frac{z}{2 \pi i} \right) \exp\left( - \lambda \frac{d}{dA} \right) \psi(z,\tau).
\end{gather*}
\end{lemma}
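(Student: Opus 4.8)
The plan is to reduce both transformation laws to the behaviour of the two ``anomalous'' generators $\A$ and $G_2$, using that by the preceding Proposition every element of $\QJac_{k,m}$ is a polynomial in $\A$ and $G_2$ whose coefficients are genuine (meromorphic) Jacobi forms assembled from $\Theta,\wp,\wp',G_4$ (the paper has recorded that, among the generators of $\CR$, only $\A$ and $G_2$ fail to be honest Jacobi forms). Throughout I write $w=z/(2\pi i)$. Concretely I would fix a presentation $\psi=\sum_{i,j}f_{ij}\,\A^{i}G_2^{j}$ in which each $f_{ij}$ is a meromorphic Jacobi form of weight $k-i-2j$ and index $m$; since $m\in\BZ$ only even powers of $\Theta$ occur, so the $f_{ij}$ carry no character, and because $\CR$ is a \emph{free} polynomial ring this presentation is unique. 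This uniqueness is exactly what makes $\frac{d}{d\A}$ and $\frac{d}{dG_2}$ well defined, and it is the only structural input beyond routine computation.

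Next I would record how the three kinds of factors transform. Each $f_{ij}$ obeys the exact Jacobi transformation laws of its weight and index. The generator $\A=\frac{1}{2\pi i}\partial_{w}\log\Theta$ inherits its behaviour from $\Theta$ (weight $-1$, index $1/2$) by logarithmic differentiation, giving $\A\!\left(\frac{z}{c\tau+d},\frac{a\tau+b}{c\tau+d}\right)=(c\tau+d)\,\A(z,\tau)+cw$ under the modular action and $\A(z+2\pi i(\lambda\tau+\mu),\tau)=\A(z,\tau)-\lambda$ under translation. The generator $G_2$ is classical: $G_2\!\left(\frac{a\tau+b}{c\tau+d}\right)=(c\tau+d)^2G_2(\tau)-\frac{c(c\tau+d)}{4\pi i}$, and being independent of $z$ it is translation invariant.

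Finally I would substitute these into $\sum_{i,j}f_{ij}\A^{i}G_2^{j}$. The decisive bookkeeping is that the shifts factor as $\A\mapsto(c\tau+d)\bigl(\A+\frac{cw}{c\tau+d}\bigr)$ and $G_2\mapsto(c\tau+d)^2\bigl(G_2-\frac{c}{4\pi i(c\tau+d)}\bigr)$, so the powers $(c\tau+d)^{i}$ and $(c\tau+d)^{2j}$ combine with the weight factor $(c\tau+d)^{k-i-2j}$ of $f_{ij}$ to telescope into the single factor $(c\tau+d)^{k}$, while the index factors $e\bigl(\frac{cmw^2}{c\tau+d}\bigr)$ pull out unchanged because $\A$ and $G_2$ have index $0$. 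The residual shifts $\A\mapsto\A+\frac{cw}{c\tau+d}$ and $G_2\mapsto G_2-\frac{c}{4\pi i(c\tau+d)}$ are, by definition of the formal derivations, implemented on a polynomial by $\exp\bigl(\frac{cw}{c\tau+d}\frac{d}{d\A}-\frac{c}{4\pi i(c\tau+d)}\frac{d}{dG_2}\bigr)$, which is the first identity. The lattice case is identical but simpler: only $\A\mapsto\A-\lambda$ survives, producing $\exp(-\lambda\frac{d}{d\A})$, and the index factors of the $f_{ij}$ assemble into $e(-m\lambda^2\tau-2m\lambda w)=e(-\lambda^tL\lambda\tau-2\lambda^tLw)$.

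The main obstacle is pinning down the inhomogeneous term $cw$ in the modular transformation of $\A$. The conceptual reason it appears is that $\widehat{\A}=\A+\alpha$ is an honest almost-holomorphic Jacobi form; as an alternative to the logarithmic-derivative argument I would verify the shift by computing how the real-analytic function $\alpha=\Im(w)/\Im(\tau)$ transforms and checking that the non-holomorphic contributions cancel against those of $\A$. Everything else is the coordinated tracking of powers of $(c\tau+d)$, and the half-integral index case runs identically once the character of $\Theta$ is carried along.
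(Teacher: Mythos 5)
Your proof is correct, but it takes a genuinely different route from the paper's. The paper attributes the lemma to \cite{OPix2}, and the intended argument there (also sketched in a suppressed comment in this paper's source) runs through the almost-holomorphic completion: one writes $\Phi = \sum_{i,j} \psi_{i,j}\,\nu^i \alpha^j$, uses that $\Phi$ transforms \emph{exactly} like a Jacobi form under the Jacobi group together with the explicit transformation behaviour of the real-analytic functions $\nu = \frac{1}{8\pi \Im(\tau)}$ and $\alpha = \frac{\Im(z/2\pi i)}{\Im(\tau)}$, and then solves for the transformation of the constant term $\psi_{0,0} = \psi$; the operators $\frac{d}{dG_2}$ and $\frac{d}{d\A}$ enter because the higher coefficients $\psi_{i,j}$ are (up to factorials) iterates of these derivations applied to $\psi$. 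You instead stay entirely in the holomorphic world: the unique presentation $\psi = \sum_{i,j} f_{ij}\,\A^i G_2^j$ coming from the freeness of $\CR$, the honest Jacobi transformation law of the coefficients $f_{ij}$, the elementary non-modular laws $\A \mapsto (c\tau+d)\A + c\,\tfrac{z}{2\pi i}$ and $G_2 \mapsto (c\tau+d)^2 G_2 - \tfrac{c(c\tau+d)}{4\pi i}$ (both of which you state correctly, and which follow from logarithmic differentiation of $\Theta$ and the classical $E_2$ law), and Taylor's theorem for polynomials to repackage the residual shifts as $\exp\bigl(a\frac{d}{d\A} + b\frac{d}{dG_2}\bigr)$, using that these two derivations commute. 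Your approach buys an elementary, self-contained proof in rank one, given the structure Proposition; the completion approach buys uniformity: it works verbatim for multivariate quasi-Jacobi forms, where the paper explicitly notes that no polynomial structure theorem is available, and it absorbs characters and half-integral index without the parity-of-$\Theta$-powers bookkeeping your argument needs for $m \in \BZ$. The two viewpoints are of course linked by $\widehat{\A} = \A + \alpha$ and $\widehat{G_2} = G_2 + \nu$, which you correctly invoke as a fallback for deriving the inhomogeneous term in the $\A$-law.
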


\subsection{Multivariate quasi-Jacobi forms}
As in \cite[Sec.\ 2]{OPix2} one can similarly define quasi-Jacobi forms of rank~$n$ in which the dependence of the variable~$z \in \BC$
is generalized to a dependence on the vector
\[ z = (z_1, \dots, z_n) \in \BC^n. \]
The index of quasi-Jacobi forms of rank~$n$ is given by a symmetric matrix
\[
m
=
\begin{pmatrix}
m_{11} & \cdots & m_{1n} \\
\vdots & \ddots & \vdots \\
m_{n1} & \cdots & n_{nn}
\end{pmatrix}.
\]
Although a description of the algebra~$\QJac^{(n)}$ of rank~$n$ quasi-Jacobi forms
in terms of concrete polynomial rings is not available in general,
using the expansions \eqref{AhatG2hat} shows that we have an embedding
\[ \QJac^{(n)} \subset \mathsf{MJac}^{(n)}\left[ G_2, \A(z_1), \ldots, \A(z_n) \right] \]
where we let~$\mathsf{MJac}^{(n)}$ denote the algebra of meromorphic-Jacobi forms of rank~$n$.
In particular, the formal derivative operators
\[ \frac{d}{d\A(z_i)}, \quad \frac{d}{d G_2} \]
are well-defined.\footnote{See also \cite{OPix2} for a direct definition via the almost-holomorphic completions.} By \cite[(12)]{OPix2} the operators satisfy the commutation relations
\begin{equation} \label{eq:comm relations 2}
\begin{alignedat}{2}
\left[ \frac{d}{dG_2}, D_{\tau} \right] & = -2 \mathrm{wt}, & \qquad \qquad \left[\frac{d}{dA(z_i)}, D_{z_j} \right] & = 2 \mathrm{ind}_{i,j}, \\
\left[\frac{d}{dG_2}, D_{z_i} \right] & = -2\frac{d}{d \A(z_i)}, &\qquad  \left[\frac{d}{dA(z_i)}, D_{\tau}\right] & = D_{z_i},
\end{alignedat}
\end{equation}
where the operator~$\mathrm{ind}_{i,j}$ multiplies a quasi-Jacobi form of index~$m$ by~$m_{ij}$.

\subsection{Polynomiality}
The following simple lemma about polynomials will be convenient for us later.
\begin{lemma} \label{Lemma_polynomiality}
Let~$f(u,v)$ be a polynomial in variables~$u,v$ and let~$F(u)$ be the unique polynomial such that
$\forall n \geq 1: \, F(n) = \sum_{j=0}^{n-1} f(j, n-j)$. Then
\[ F(-n) = - \sum_{j=1}^{n} f(-j, -n+j). \]
\end{lemma}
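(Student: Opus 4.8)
The plan is to isolate the one–variable phenomenon behind the statement and then handle the two–variable coupling separately. First I would record that $F$ is well defined and unique: expanding $f(u,v)=\sum_{a,b}c_{ab}u^av^b$ and applying the binomial theorem to $(n-j)^b$ rewrites $\sum_{j=0}^{n-1}f(j,n-j)$ as a $\BZ[n]$–linear combination of the Faulhaber sums $S_m(n)=\sum_{j=0}^{n-1}j^m$, each of which is a polynomial in $n$; uniqueness is automatic, since a polynomial is determined by its values at all $n\geq 1$. The key input is the following one–variable reflection lemma: if $h$ is any polynomial in one variable and $H$ is the polynomial with $H(n)=\sum_{j=0}^{n-1}h(j)$ for $n\geq 1$, then for all $n \geq 1$
\[ H(-n) = -\sum_{j=1}^{n} h(-j). \]
I would prove this by telescoping. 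The identity $H(X+1)-H(X)=h(X)$ holds for all integers $X\geq 1$, hence identically as polynomials; evaluating at $X=1$ gives $H(0)=0$, and summing $H(-k+1)-H(-k)=h(-k)$ over $k=1,\dots,n$ yields the claim.

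To pass to the statement at hand, the obstacle is that the summand $f(j,n-j)$ depends on $n$, so one cannot directly substitute $n\mapsto -n$. I would decouple the two roles of $n$ by introducing the two–variable polynomial $H(N,m):=\sum_{j=0}^{N-1}f(j,m-j)$, which is a polynomial in $(N,m)$ by the reduction above. Applying the reflection lemma in the variable $N$ alone, with $m$ held as a parameter and $h(t)=f(t,m-t)$, gives $H(-N,m)=-\sum_{j=1}^{N}f(-j,m+j)$ for every integer $N\geq 1$ and every $m$. Both sides are polynomials in $(N,m)$ agreeing on the Zariski–dense set $\{N\in\BZ_{\geq 1}\}\times\BC$, so this is an identity of polynomials in $(N,m)$.

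Finally I would restore the diagonal. Since $F(n)=H(n,n)$ for all $n\geq 1$, we have $F(X)=H(X,X)$ as polynomials, and therefore $F(-N)=H(-N,-N)$. Specializing the two–variable identity at $m=-N$ gives $H(-N,-N)=-\sum_{j=1}^{N}f(-j,-N+j)$, which is exactly the asserted formula.

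The main obstacle is precisely the $n$–dependence of the summand; the device that removes it is to split the summation bound from the shift parameter into two independent variables, reflect only in the bound, and then re–impose equality on the diagonal via the resulting polynomial identity. As an alternative to the Zariski–density step, one can instead reduce to monomials $f=u^av^b$, apply the scalar reflection $S_m(-n)=-\sum_{j=1}^{n}(-j)^m$ to each Faulhaber term, and reassemble the factor $(j-n)^b$ by the binomial theorem; this route is more computational but entirely elementary.
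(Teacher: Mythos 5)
Your proof is correct and takes essentially the same route as the paper's: both decouple the summation bound from the shift parameter into a two-variable polynomial, use the difference equation (telescoping) to identify its values at negative bounds with the reflected sum, and finish by specializing to the diagonal. One harmless slip: $H(0)=0$ follows from evaluating the polynomial identity $H(X+1)-H(X)=h(X)$ at $X=0$ (combined with $H(1)=h(0)$), not at $X=1$.
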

\begin{proof}
For all~$m\in\mathbb{Z}, n > 0$ define
\[
G(m,n) = \sum_{j=0}^{n-1} f(j, m-j).
\]
This agrees with a unique polynomial~$P(m,n)$. Now extend~$G$ to all~$m,n\in\mathbb{Z}$ by setting~$G(m,0) = 0$ and
\[
G(m,n) = -\sum_{j=1}^{-n} G(-j, m+j)
\]
for all~$m\in\mathbb{Z},n < 0$. We then have that~$G(m,n+1)-G(m,n) = f(n,m-n)$ is a polynomial for all~$m,n$. But~$P(m,n+1)-P(m,n)$ is also a polynomial. The two polynomials agree for~$n > 0$, so they agree for all~$n$; since~$G$ and~$P$ also agree for~$n > 0$, this means that they must also agree for all~$n$.

The lemma now follows, since it is just saying that~$F(-n) = P(-n,-n) = G(-n,-n)$.
\end{proof}

We also will find the following language convenient: we say that a set of power series~$f_m(z) \in R[[z]], m \in \BZ$ for some coefficient ring~$R$ is \emph{polynomial in~$m$} if there exist
polynomials~$P_k(u) \in R[u]$ such that
\[ \forall m \in \BZ : \ f_m(z) = \sum_{k \geq 0} P_k(m) z^k. \]
In our case the coefficient ring~$R$ will usually be the ring of quasi-modular forms~$\QMod$.

\section{Differential equation}
In this section we study the function~$\phi_m$ defined by the differential equation \eqref{eq:dif} and
the constant term~$\phi_m = p^{m/2} - p^{-m/2} + O(q)$.
We first prove the evaluation
$$\phi_m = \mathrm{Res}_{x=0} \left(\frac{\Theta(x+z)}{\Theta(x)}\right)^m$$
which immediately implies that~$\phi_m$ is a quasi-Jacobi form.
We then study the Fourier expansion of~$\phi_m$,
discuss the dependence of~$\phi_m$ on the parameter~$m$,
and derive a holomorphic anomaly equation.

\subsection{Proof of \cref{thm:sol}} \label{subsection:proof_gen_series}
\emph{Define} functions~$\phi_m$,~$m \geq 0$ by the claim of the theorem i.e. let
$\phi_m = \mathrm{Res}_{x=0} \left(\frac{\Theta(x+z)}{\Theta(x)}\right)^m$.
We need to check that these function satisfy the differential equations \eqref{eq:dif} and have the right constant term \eqref{eq:cst term}.
Checking the constant term is straightforward and we omit the details (see also Section~\ref{subsection_Fourier_expansion}).
To check the differential equation we form the generating series
$g(y) = \sum_{m \geq 1} y^m \phi_m / m$. 
Let also~$D_y = y \frac{d}{dy}$.
The differential equation (\ref{eq:dif}) is then equivalent to
\begin{equation} D_{\tau}^2 g(y) = F(z,\tau) D_y^2 g(y). \label{eqn:diff for g} \end{equation}

Consider the function
\[ f(x) = \frac{\Theta(x+z)}{\Theta(x)}. \]
We will apply the variable change
\[ y = \frac{1}{f(x)}\ \Longleftrightarrow\ x = g(y) \]
where we have used Lagrange inversion to identify the inverse of~$1/f$ with the generating series~$g(y)$.
Let~$f'(x) := D_x f := \frac{d}{dx} f(x)$. 
By differentiating~$f(g(y))=y$ and applying the chain rule we find the transformations
\begin{gather*}
D_y g(y) = -\frac{f}{f'}, \quad \quad 
D_{\tau} g(y) = -\frac{D_\tau f}{f'}, \quad \quad 
D_y^2 g(y) = -\frac{f}{f'} \cdot \frac{f'' f - (f')^2}{(f')^2}, \\
D_{\tau}^2 g(y) = -\frac{1}{(f')^3} \left[ D_{\tau}^2(f) (f')^2 - 2 f' \cdot D_{\tau}(f) D_{\tau}(f') + f'' \cdot D_{\tau}(f)^2 \right].
\end{gather*}
Applying these and changing variables the differential equation \eqref{eqn:diff for g} becomes 
\begin{equation} \label{diffeq}
D_x(f)^2 D_{\tau}^2(f) -  2  D_x(f)  D_x D_{\tau}(f)  D_{\tau}(f) + D_x^2(f)  D_{\tau}(f)^2 = F(z,\tau) \cdot D_x^2 \log(f) \cdot f^3.
\end{equation}

The functions~$\Theta(x+z)$ and~$\Theta(x)$ are Jacobi forms of rank~$2$ in the elliptic variables~$(x,z)$ of index~$\frac{1}{2} \binom{1\ 1}{1\ 1}$ and~$\binom{1/2\ 0}{0\phantom{/2}\ 0}$ respectively.
Hence~$f(x)$ is a Jacobi form of weight 0 and index
\[ \begin{pmatrix} 0 & 1/2 \\ 1/2 & 1/2 \end{pmatrix}. \]
We need to show that the following function vanishes:
\[ \CF(x,z) = D_x(f)^2 D_{\tau}^2(f) - 2 D_x(f) D_x D_{\tau}(f) D_{\tau}(f) + D_x^2(f) D_{\tau}(f)^2 - F(z,\tau) \cdot D_x^2 \log(f) \cdot f^3. \]
As a polynomial in the derivatives of~$f$, the function~$\CF$ is a rank~$2$ quasi-Jacobi form
of weight~$6$ and index~$\frac{1}{2} \binom{0\ 3}{3\ 3}$. 
Using the commutation relations \eqref{eq:comm relations 2} a direct check shows
\[
\frac{d}{dG_2} \CF = \frac{d}{dA(x)} \CF = 0.
\]
In particular, by \cite[Lem.\ 6]{OPix2}
we have~$\CF(x+ 2 \pi i \tau, z) = p^{-3} \CF(x,z)$.
Moreover, by considering the Taylor expansion one checks (e.g. using a computer\footnote{The code for this computation as well as a parallel computation in Section~\ref{sec:holomorphic anomaly equations} can be found on the webpage of the second author. It also contains functions which express the $\phi_m, \phi_{mn}$ in terms of the generators of $\CR$.}) that~$\CF$ is holomorphic at~$x=0$ and
vanishes to order~$3$ at~$x=-z$ (use the variable change~$\tilde{x} = x+z$).
We conclude that the ratio
$\CF / f^3$ is a double-periodic and holomorphic in~$x$, so a constant in~$x$.
The constant is a quasi-Jacobi form in~$z$ and is easily checked to vanish. 
This shows that the differential equation is satisfied.
The claim that the~$\varphi_m$ are quasi-Jacobi forms of the specified weight follows from Lemma~\ref{cor_main_thm} below.
\qed

\medskip
Define the operator on the algebra of quasi-Jacobi forms by
\[ D=D_z + 2 G_2 \frac{d}{d \A} \]
We conclude the following structure result.

\begin{lemma}\label{conj:2} \label{cor_main_thm}
For every~$m \geq 0$ there exist modular forms~$h_k \in \Mod_{m - k -1}$ such that
\[ \phi_m = \sum_{k = 0}^{m-1} h_k(\tau) \cdot D^k(\Theta(z)^m). \]
Hence every~$\phi_m$ is a quasi-Jacobi form of weight~$-1$ and index~$\frac{|m|}{2}$, and~$\frac{d}{dG_2} \phi_m = 0$.
\end{lemma}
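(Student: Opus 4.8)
The plan is to derive the explicit formula directly from the residue representation
\[ \phi_m = \mathrm{Res}_{x=0}\Big(\frac{\Theta(z+x)}{\Theta(x)}\Big)^m \]
of Theorem~\ref{thm:sol}. Using \eqref{theta_z_expansion} I would write $\Theta(x)=x\,E(x)$ with $E(x)=\exp\!\big(-2\sum_{k\geq 2}G_k x^k/k!\big)$ and Taylor-expand $\Theta(z+x)^m=\sum_{\ell\geq 0}\frac{x^\ell}{\ell!}D_z^\ell(\Theta^m)$, so that $\phi_m$ is the coefficient of $x^{m-1}$ in $\Theta(z+x)^m E(x)^{-m}$. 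The decisive observation is that among the Eisenstein series only $G_2$ fails to be modular; splitting it off gives
\[ E(x)^{-m}=\exp\!\Big(2m\sum_{k\geq 2}G_k \tfrac{x^k}{k!}\Big)=e^{m G_2 x^2}\,M(x),\qquad M(x):=\exp\!\Big(2m\sum_{k\geq 4}G_k \tfrac{x^k}{k!}\Big), \]
whose coefficients satisfy $[x^n]M(x)\in\Mod_n$.

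The key step is to absorb the factor $e^{mG_2x^2}$ into the derivatives by establishing the operator identity
\[ \sum_{\ell\geq 0}\frac{x^\ell}{\ell!}D^\ell(\Theta^m)=e^{m G_2 x^2}\,\Theta(z+x)^m,\qquad D=D_z+2G_2\tfrac{d}{d\A}. \]
Both sides are the unique power series $\Psi(x)$ with $\Psi(0)=\Theta^m$ solving $\tfrac{d}{dx}\Psi=D\Psi$: for the left side this is tautological, and for the right side I would use the holomorphic anomaly $\tfrac{d}{d\A}\Theta(z+x)=x\,\Theta(z+x)$. This anomaly is immediate from the commutation relation $[\tfrac{d}{d\A},D_z]=2\,\ind$ of \eqref{eq:comm relations 1}: since $\Theta$ has index $\tfrac12$ and $\tfrac{d}{d\A}\Theta=0$, an induction gives $\tfrac{d}{d\A}D_z^\ell\Theta=\ell\,D_z^{\ell-1}\Theta$, which sums to the claim. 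Granting it, $D(e^{mG_2x^2})=0$ and $\tfrac{d}{d\A}\Theta(z+x)^m=mx\,\Theta(z+x)^m$ yield $D\big(e^{mG_2x^2}\Theta(z+x)^m\big)=\tfrac{d}{dx}\big(e^{mG_2x^2}\Theta(z+x)^m\big)$, as required.

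Combining the two displays the factors $e^{\pm mG_2x^2}$ cancel, and extracting the coefficient of $x^{m-1}$ gives
\[ \phi_m=\sum_{\ell=0}^{m-1}\frac{[x^{m-1-\ell}]M(x)}{\ell!}\,D^\ell(\Theta^m), \]
so that $h_\ell:=\frac{1}{\ell!}[x^{m-1-\ell}]M(x)\in\Mod_{m-1-\ell}$ are the required modular forms. The remaining assertions then follow formally: a short check with \eqref{eq:comm relations 1} shows that $D$ commutes with $\tfrac{d}{dG_2}$, raises weight by $1$ and preserves index, while $\Theta$ is an honest Jacobi form of weight $-1$ and index $\tfrac12$ with $\tfrac{d}{dG_2}\Theta=0$; hence each $D^\ell(\Theta^m)$ is a quasi-Jacobi form of weight $-m+\ell$ and index $\tfrac m2$ annihilated by $\tfrac{d}{dG_2}$, and multiplying by $h_\ell$ produces weight $-1$, index $\tfrac m2$, and $\tfrac{d}{dG_2}\phi_m=0$.

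I expect the one genuinely substantive point to be the operator identity $e^{xD}(\Theta^m)=e^{mG_2x^2}\Theta(z+x)^m$ — that is, recognizing that it is the modified derivative $D$, and not $D_z$, that converts the expansion coefficients of $\Theta(x)^{-m}$ from quasi-modular into modular forms. The extra term $2G_2\tfrac{d}{d\A}$ in $D$ accounts, via the anomaly $\tfrac{d}{d\A}\Theta(z+x)=x\,\Theta(z+x)$, for exactly the factor $e^{mG_2x^2}$ that must be separated out of $E(x)^{-m}$; once this matching is seen, everything else is bookkeeping with Taylor coefficients. The only technical care needed is to justify that the residue identity holds termwise as an identity of power series in $x$ with quasi-Jacobi coefficients, so that extracting the coefficient of $x^{m-1}$ is legitimate.
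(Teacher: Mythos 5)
Your proposal is correct and takes essentially the same route as the paper: the same splitting of the factor $e^{mG_2x^2}$ out of $\Theta(x)^{-m}$, the same key operator identity $e^{xD}(\Theta^m)=e^{mG_2x^2}\,\Theta(z+x)^m$, and the same bookkeeping to extract the coefficient of $x^{m-1}$ and deduce weight, index, and $\frac{d}{dG_2}\phi_m=0$ from $[\frac{d}{dG_2},D]=0$. The only difference is how that identity is verified: the paper applies the Baker--Campbell--Hausdorff formula to $e^{D_z x}e^{2G_2\frac{d}{d\A}x}$ using the relations \eqref{eq:comm relations 1}, while you use uniqueness of solutions of $\frac{d}{dx}\Psi=D\Psi$ together with the anomaly $\frac{d}{d\A}\Theta(z+x)=x\,\Theta(z+x)$, which is an equally valid and slightly more self-contained check.
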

\begin{proof}[Proof of Corollary]
For any power series~$f(z)$ we have
\[ e^{D_z x}f(z)=f(x+z). \]
Moreover, the Baker--Campbell--Hausdorf formula 
and the relations \eqref{eq:comm relations 1} yield
\begin{equation} \label{BCH}
e^{D_z x} e^{2 G_2 \frac{d}{d \A} x} = e^{x D - 2 x^2 G_2 \ind} = e^{-2 x^2 G_2 \ind} e^{xD}.
\end{equation}
We find that
\begin{align}
\frac{\Theta(x+z)^m }{\Theta(x)^m}
& = \Theta(x)^{-m} e^{D_z x} \left( \Theta(z)^m \right) \notag \\
& = \Theta(x)^{-m} e^{D_z x} e^{2 G_2 \frac{d}{d \A} x} \left( \Theta(z)^m \right) \notag \\
& \overset{\eqref{BCH}}{=} \Theta(x)^{-m} e^{- m x^2 G_2} e^{xD} \left(  \Theta(z)^m \right) \notag \\
& = x^{-m} \exp\left( 2m \sum\nolimits_{k \geq 4}G_k \frac{x^k}{k!} \right) e^{xD} \left(  \Theta(z)^m \right) \label{eq:resfm}
\end{align}
where we used \eqref{theta_z_expansion} in the last step.
Taking the coefficient of~$x^{-1}$ yields the first claim.
The second claim follows from the commutation relation~$[ \frac{d}{dG_2}, D ] = 0$. 
\end{proof}

\begin{remark}
For all $m \geq 0$ we have
\[ 
\phi_{-m}
= \mathrm{Res}_{x=-z} \left(\frac{\Theta(x+z)}{\Theta(x)}\right)^{-m}
\]
Indeed, after the variable change~$x' = -(x+z)$ the right hand side becomes
\[ - \mathrm{Res}_{x'=0} \left( \frac{\Theta(-x'-z)}{\Theta(-x')} \right)^m = - \phi_m. \qedhere \]
\end{remark}

\subsection{Fourier expansion} \label{subsection_Fourier_expansion}
By integrating the function
\[ f_m(x) = \left( \frac{\Theta(x+z)}{\Theta(x)} \right)^m. \]
around the sides of a fundamental region and using~$f_m(x+\tau,z) = p^{-m} f_m(x,z)$ 
one gets
\[ \varphi_m = \mathrm{Res}_{x=0} f_m = (1-p^{-m}) \mathrm{Coeff}_{\sigma^0} f_m(x,z,\tau) \]
where~$\sigma = e^{x}$ is the Fourier variable associated to~$x$.\footnote{See also \cite[App.~A]{HAE} for a similar argument.}

An application of the Jacobi triple product and computing the power by~$m$ by taking first the log of each product term, multiplying it by~$m$ and then exponentiating again,
together with a bit of reordering the terms, then yields from this the expression
\begin{align*} 
\varphi_m
& = (p^{m/2} - p^{-m/2}) \mathrm{Coeff}_{\sigma^0} \exp\left( \sum_{k \neq 0} \frac{m}{k} \sigma^k \frac{1-p^k}{1-q^k} \right) \\
& = (p^{m/2} - p^{-m/2}) \sum_{|a| = 0} \left( \prod_i \frac{1-p^{a_i}}{1-q^{a_i}} \right) \frac{m^{l(a)}}{\Fz(a)}
\end{align*}
where the sum in the second equation is over all generalized partitions with non-zero parts summing up to~$0$.
Moreover, if we write~$a = (i^{a_i})_{i \in \BZ\setminus \{0\}}$ then
$\Fz(a) = \prod_i i^{a_i} a_i!$ is the standard automorphism factor.
The first Fourier coefficients of~$\phi_m$ are
\[ \varphi_{m} = (s^{m} - s^{-m}) \left( 1 - m^2 (s-s^{-1})^2 q + O(q^2) \right) \]
where we have written~$s = e^{z/2}$ so~$p=s^2$.

\subsection{The solution $\phi_m$ as a function of $m$}
In this section we consider~$\phi_m$ as a function of~$m$ viewed as a (formal) variable.
To distinguish with the case~$m \in \BZ$ we will replace~$m$ by a variable~$u$.

We give three different formulas for~$\phi_u$. First, consider the expansion
\[ F(s,q) = \sum_{k \geq 1} F_k(s) q^k, \quad \quad  F_k(s) = -\sum_{d|k} \Big( \frac{k}{d} \Big)^3 (s^{d} - s^{-d})^2 \]
where as before we have used~$s = e^{z/2}$ so~$p=s^{2}$.
Then by an immediate check the differential equation \eqref{eq:dif} for~$\varphi_m$ is equivalent to the following formula:
\begin{equation} \label{phiu_formula}
\phi_u = 
(p^{u/2} - p^{-u/2})
\left( 1 + \sum_{m \geq 1} \sum_{k_1, \ldots , k_m \geq 1} 
\frac{ F_{k_1}(s) F_{k_2}(s) \cdots F_{k_m}(s) }{ k_1^2 (k_1+k_2)^2 \ldots (k_1+...+k_m)^2 } q^{k_1 + \ldots + k_m} u^{2m} \right)
\end{equation}

Second we can use the Fourier expansion of the~$\phi_m$ as discussed in Section~\ref{subsection_Fourier_expansion}:
\[
\phi_u
=
(p^{u/2} - p^{-u/2})
\sum_{|a| = 0} \left( \prod_i \frac{1-p^{a_i}}{1-q^{a_i}} \right) \frac{u^{l(a)}}{\Fz(a)}
\]

We see that Theorem~\ref{thm:sol} is equivalent to the following non-trivial identity:
\begin{multline*} \sum_{|a| = 0} \left( \prod_i \frac{1-p^{a_i}}{1-q^{a_i}} \right) \frac{u^{l(a)}}{\Fz(a)} 
=
1 + 
\sum_{\substack{m \geq 1 \\ k_1, \ldots , k_m \geq 1}} 
\frac{ F_{k_1}(s) F_{k_2}(s) \cdots F_{k_m}(s) }{ k_1^2 (k_1+k_2)^2 \ldots (k_1+...+k_m)^2 } q^{k_1 + \ldots + k_m} u^{2m}.
\end{multline*}

For the third formula, we use a Taylor expansion in~$u$.
For positive integers~$u$ one can write the solution
$$\mathrm{Coeff}_{x^{-1}} \left(\frac{\Theta(x+z)}{\Theta(x)}\right)^u$$
as
\begin{align}
\varphi_u
&=\mathrm{Coeff}_{x^{-1}} \frac{(x+z)^u}{x^u}\exp\left(2u\sum\nolimits_{k\geq 2} G_k \frac{x^k-(x+z)^k}{k!}\right) \notag \\
&=\mathrm{Coeff}_{x^{-1}} \sum_{\ell=1}^\infty \binom{u}{\ell}\left(\frac{z}{x}\right)^\ell\exp\left(2u\sum\nolimits_{k\geq 2} G_k \frac{x^k-(x+z)^k}{k!}\right). \label{u-exp}
\end{align}
The latter expression makes sense as an element of~$\c[[z]]$ for all~$u\in \c$. 
For example, the first terms read
\small
 \begin{align*}
 \varphi_u & = uz -  G_{2} u^{3}z^{3} + \left(\left(\frac{1}{3} G_{2}^{2} - \frac{1}{72} G_{4}\right) u^{5} 
 + \left(\frac{1}{6} G_{2}^{2} - \frac{5}{72} G_{4}\right) u^{3}\right)z^{5} + O(z^7).
 \end{align*}
\normalsize

The expansion \eqref{u-exp} yields the following important structure result.
\begin{prop} \label{prop_polynomiality} For every~$k \geq 1$ there exist odd polynomials~$P_{k}(u)$ of degree~$\leq k$ with coefficients in~$\QMod_{k-1}$ such that
for all~$m \in \BZ$
\[ \phi_{m} = \sum_{\text{odd } k \geq 1} z^k P_{k}(m). \]
Moreover,~$P_1(u) = u$ and if~$k \geq 2$, then~$u^3 \mid P_k(u)$. 
\end{prop}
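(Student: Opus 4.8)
The plan is to read off every assertion from the two explicit expansions of $\phi_u$ already available, namely \eqref{u-exp} and \eqref{phiu_formula}, both of which make sense for arbitrary $u$. Polynomiality, the degree bound and the weight will come from \eqref{u-exp}; the two parity statements, the normalization $P_1(u)=u$ and the divisibility $u^3\mid P_k$ will come from \eqref{phiu_formula}.

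First I would extract polynomiality, degree and weight from \eqref{u-exp}. Write the exponent there as $A(x,z)=\sum_{k\geq2}G_k\,\frac{x^k-(x+z)^k}{k!}$ and observe that $A$ is divisible by $z$, since $x^k-(x+z)^k$ vanishes at $z=0$. Expanding $\exp(2uA)=\sum_{j\geq0}\frac{(2u)^j}{j!}A^j$ and the prefactor $\sum_{\ell}\binom{u}{\ell}(z/x)^\ell$, a pair $(\ell,j)$ contributes a term of $u$-degree $\ell+j$ that is divisible by $z^{\ell+j}$. After taking $\mathrm{Coeff}_{x^{-1}}$, only pairs with $\ell+j\leq k$ reach the coefficient of $z^k$; hence that coefficient is a polynomial $P_k(u)$ in $u$ of degree $\leq k$ with quasimodular coefficients. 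For the weight I assign $x$ and $z$ weight $-1$ and $G_k$ weight $k$; then $A$ and $(z/x)^\ell$ are homogeneous of weight $0$, so the whole integrand is homogeneous of weight $0$, and picking out the coefficient of $x^{-1}z^k$ yields a quasimodular form of weight $k-1$. Thus $P_k\in\QMod_{k-1}[u]$ with $\deg P_k\leq k$.

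Next I would use \eqref{phiu_formula}, which exhibits $\phi_u$ as $p^{u/2}-p^{-u/2}$ times a series in which $u$ occurs only through even powers $u^{2m}$ and $z$ only through the functions $F_k(s)$, $s=e^{z/2}$. Since $F_k(s^{-1})=F_k(s)$, this second factor is even in $z$, and it is manifestly even in $u$. As $p^{u/2}-p^{-u/2}$ is odd in both $z$ and $u$, the product $\phi_u$ is odd in $z$ and odd in $u$: oddness in $z$ forces $P_k=0$ for even $k$, and oddness in $u$ makes each $P_k$ an odd polynomial. Comparing $u^1$-parts, the $u^1$-part of $\phi_u$ equals the $u^1$-part of $p^{u/2}-p^{-u/2}$ (namely $z$) times the $u^0$-part of the second factor (namely $1$), so $[u^1]P_1=1$ while $[u^1]P_k=0$ for $k\geq2$. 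As $P_1$ is odd of degree $\leq1$ with coefficients in $\QMod_0=\c$, this gives $P_1(u)=u$; and for $k\geq2$ oddness already removes the constant and quadratic terms, so the vanishing linear coefficient yields $u^3\mid P_k(u)$. Finally the displayed identity holds for $m\geq0$ by \eqref{u-exp} (equivalently Theorem~\ref{thm:sol}) and extends to $m<0$ via $\phi_m=-\phi_{-m}$ together with the oddness of the $P_k$.

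The computations are routine; the one point needing care is the degree bound in the first step, which rests entirely on the divisibility of the exponent $A$ by $z$, as this is what couples the $z$-order and the $u$-degree of each contribution and thereby caps $\deg P_k$ at $k$.
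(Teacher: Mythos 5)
Your proof is correct and follows essentially the same route as the paper, whose own ``proof'' consists of the single assertion that the expansion \eqref{u-exp} yields the proposition: your first step (polynomiality, the degree bound~$\le k$, and the weight~$k-1$ of the coefficients, all read off from \eqref{u-exp}) is exactly the content of that assertion, and your use of \eqref{phiu_formula} for the parity and divisibility claims is the natural way to supply the details the paper omits. The one step you leave implicit is why properties of \eqref{phiu_formula} may be applied to the polynomials $P_k$ you defined from \eqref{u-exp}: a priori these are two different families, and they are identified by noting that both expansions are coefficient-wise (in $z^k q^a$) polynomial in $u$ --- for \eqref{phiu_formula} this follows since each $F_{k_i}(s)=O(z^2)$, so the $u^{2m}$-terms contribute only from order $z^{2m+1}$ on --- and that they agree at every positive integer $u=m$ by Theorem~\ref{thm:sol}, hence agree as polynomials. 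With that one sentence made explicit your argument is complete, and it is in fact more detailed than the paper's.
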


\subsection{Anomaly equation}
We consider the holomorphic anomaly of~$\phi_m$ with respect to the variable~$z$.
\begin{prop}\label{prop:anomaly}
For all~$m\geq 1$ one has
\[ \frac{d}{d \A}\varphi_m = \frac{1}{2}\sum_{\substack{i+j=m\\ i,j \geq 1}} \frac{m^2}{ij}\varphi_i \varphi_j.\]
\end{prop}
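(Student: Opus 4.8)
The plan is to combine the residue formula $\phi_m = \Res_{x=0} f(x)^m$ from Theorem~\ref{thm:sol} (where $f(x) = \Theta(x+z)/\Theta(x)$) with the explicit expansion \eqref{eq:resfm} and then evaluate the resulting residue by Lagrange inversion. The key observation is that the formal derivative $\frac{d}{d\A}$ acts only in the elliptic variable $z$, so it commutes with $\Res_{x=0}$; applying it termwise to the Laurent expansion of $f^m$ in $x$ computes the honest holomorphic anomaly of each quasi-Jacobi coefficient, in particular of $\phi_m$ itself.

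First I would recall from \eqref{eq:resfm} that
\[ f(x)^m = x^{-m}\exp\!\Big(2m\sum\nolimits_{k\geq 4}G_k\tfrac{x^k}{k!}\Big)\,e^{xD}\big(\Theta(z)^m\big), \qquad D = D_z + 2G_2\tfrac{d}{d\A}. \]
Since the prefactor $x^{-m}\exp(\cdots)$ involves only the modular forms $G_k$ with $k\geq 4$, it has vanishing $\frac{d}{d\A}$-derivative, so only $e^{xD}\Theta^m$ is affected. Using the commutation relations \eqref{eq:comm relations 1} one checks $[\frac{d}{d\A}, D] = 2\ind$ and $[\ind, D] = 0$, whence $\frac{d}{d\A}\,e^{xD} = e^{xD}\big(\frac{d}{d\A} + 2x\,\ind\big)$. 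Applying this to $\Theta(z)^m$ and using $\frac{d}{d\A}\Theta^m = 0$ together with $\ind\,\Theta^m = \frac{m}{2}\Theta^m$ gives $\frac{d}{d\A}\,e^{xD}\Theta^m = m x\, e^{xD}\Theta^m$, i.e.\ (termwise in the $x$-Laurent expansion) $\frac{d}{d\A}f^m = m x f^m$. Extracting the coefficient of $x^{-1}$ then yields the clean intermediate identity $\frac{d}{d\A}\phi_m = m\,\Res_{x=0}\big(x f(x)^m\big)$.

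The second step is to evaluate this residue via the change of variables $y = 1/f(x)$ of \eqref{inversion}, under which $x = g(y) := \sum_{i\geq1}\frac{\phi_i}{i}y^i$ and $f^m = y^{-m}$. Substituting $dx = g'(y)\,dy$ turns the residue into $\Res_{y=0}\big(g\,g'\,y^{-m}\big) = \tfrac12\Res_{y=0}\big(\tfrac{d}{dy}(g^2)\,y^{-m}\big) = \tfrac{m}{2}\,[y^m]\,g(y)^2$. Since $[y^m]g^2 = \sum_{i+j=m}\frac{\phi_i\phi_j}{ij}$, combining the two steps gives exactly $\frac{d}{d\A}\phi_m = \frac{m^2}{2}\sum_{i+j=m}\frac{\phi_i\phi_j}{ij}$, as claimed.

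The main conceptual obstacle to watch for is the apparent paradox that $f^m$ is a genuine meromorphic Jacobi form in the two variables $(x,z)$ — and hence has vanishing holomorphic anomaly as such — while $\phi_m$ does not. The resolution, which must be stated carefully, is that extracting the $x^{-1}$-coefficient (equivalently, Laurent-expanding about $x=0$) destroys the modular behaviour in $x$, so the termwise action of $\frac{d}{d\A}$ on the $x$-expansion legitimately records the anomaly of the individual $z$-quasi-Jacobi coefficients rather than the vanishing anomaly of the global meromorphic form. Beyond this point, the only routine verifications are the two commutator identities $[\frac{d}{d\A},D]=2\ind$ and $[\ind,D]=0$ and the leading-order check $f(x)\sim\Theta(z)/x$ near $x=0$ ensuring that $y=1/f$ is a valid local coordinate.
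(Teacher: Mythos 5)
Your proposal is correct and takes essentially the same route as the paper: your key identity $\frac{d}{d\A}f^m = m\,x\,f^m$ is exactly the paper's equation \eqref{aaa} (obtained there directly from $[\tfrac{d}{d\A},D_z]=2\ind$, by you via the conjugation $\tfrac{d}{d\A}e^{xD}=e^{xD}(\tfrac{d}{d\A}+2x\ind)$ applied to \eqref{eq:resfm}), and your change-of-variables evaluation of $\Res_{x=0}\bigl(x f(x)^m\bigr)$ is precisely the $k=2$ case of Lemma~\ref{lem:powerseries}, proved by hand via Lagrange inversion rather than cited. The "paradox" you flag is resolved correctly and is implicit in the paper's own use of \eqref{aaa}: the operator acts termwise on the Laurent expansion in $x$, whose coefficients are quasi-Jacobi forms in $z$.
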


It follows that every~$z^k$ coefficient of~$\frac{d}{dA} \phi_m$ is polynomial in~$m$ in the range~$m\geq 0$.
However the dependence on~$m$ is only piecewise polynomial in general:

\begin{cor} \label{Cor_polynomiality_Anomaly}
The difference
\[
\phi_m^A = \frac{d}{dA} \phi_m - m z \phi_m \delta_{m < 0}
\]
depends polynomially on~$m$, i.e.\ there exist polynomials~$Q_k(u)$ of degree~$\leq k+1$ with coefficients in~$\QMod_{k-2}$ such that
$\phi_m^A = \sum_{k \geq 2} z^k Q_{k}(m)$.
Moreover,~$u^2 \mid Q_k$ for all~$k$.
\end{cor}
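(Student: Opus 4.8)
The plan is to treat the ranges $m>0$ and $m<0$ separately and glue them using Lemma~\ref{Lemma_polynomiality}, with the correction term $-mz\phi_m\delta_{m<0}$ absorbing the mismatch between the two polynomial branches.

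First, for $m\geq 1$, I would insert the anomaly equation of Proposition~\ref{prop:anomaly} and extract the coefficient of $z^k$. Writing $\phi_i=\sum_{\text{odd }a}z^a P_a(i)$ as in Proposition~\ref{prop_polynomiality}, and using that $p_a(u):=P_a(u)/u$ is a polynomial (this is exactly the content of $P_1(u)=u$ and $u^3\mid P_a$ for $a\geq 3$), one finds
\[ \Big[\tfrac{d}{dA}\phi_m\Big]_{z^k} = \tfrac{1}{2}\,m^2\sum_{i=1}^{m-1}h(i,m-i), \qquad h(i,j)=\sum_{\substack{a+b=k\\ a,b\ \mathrm{odd}}}p_a(i)\,p_b(j). \]
Since $h$ is a polynomial in $i,j$, the inner sum agrees with a polynomial $\tilde H(m)$, so the coefficient equals $Q_k(m)$ where $Q_k(u):=\tfrac12 u^2\tilde H(u)$, for all $m\geq 1$. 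The explicit factor $u^2$ gives $u^2\mid Q_k$ for free; since $\deg h\leq k-2$ forces $\deg\tilde H\leq k-1$, we get $\deg Q_k\leq k+1$; and because $P_a$ has coefficients in $\QMod_{a-1}$, the form $h$ lands in $\QMod_{k-2}$, hence so do $\tilde H$ and $Q_k$. This settles all the quantitative claims and the case $m\geq 0$ (note $Q_k(0)=0$, and $Q_k=0$ for odd $k$, matching $\phi_0=0$ and the vanishing of odd $z$-powers).

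Next I turn to $m<0$, writing $m=-n$ with $n\geq 1$. Here I would use $\phi_{-n}=-\phi_n$ to get $[\frac{d}{dA}\phi_{-n}]_{z^k}=-C_k(n)$, where $C_k=Q_k$ is the positive branch, and then identify the correct polynomial value $Q_k(-n)$ via Lemma~\ref{Lemma_polynomiality}. The crucial input is that $p_a$ is an \emph{even} polynomial (because $P_a$ is odd), so the full summand $f(i,j)=\tfrac12(i+j)^2h(i,j)$ satisfies $f(-j,-n+j)=f(j,n-j)$. Applying the Lemma to $F(n):=\sum_{j=0}^{n-1}f(j,n-j)=\tfrac12 nP_{k-1}(n)+C_k(n)$ then yields $F(-n)=-C_k(n)-\tfrac12 nP_{k-1}(n)$, and subtracting the $f(0,\cdot)$ term (again using $P_{k-1}$ odd) gives $Q_k(-n)=-C_k(n)-nP_{k-1}(n)$.

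Finally I would match this against the definition of $\phi_m^A$. For $m=-n<0$ we have $\phi_{-n}^A=\frac{d}{dA}\phi_{-n}+nz\phi_{-n}$; extracting $z^k$ and using $\phi_{-n}=-\phi_n$ together with the oddness of $P_{k-1}$ gives $[\phi_{-n}^A]_{z^k}=-C_k(n)-nP_{k-1}(n)=Q_k(-n)$, exactly as required. The only genuinely delicate point is this last gluing: $\frac{d}{dA}\phi_m$ on its own is merely piecewise polynomial in $m$, and it is the interplay of the $z\phi_m$ correction term with the precise negative-range formula of Lemma~\ref{Lemma_polynomiality}---itself reliant on the evenness of $P_a(u)/u$---that restores genuine polynomiality. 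Everything else is routine degree and weight bookkeeping that I would relegate to a direct check.
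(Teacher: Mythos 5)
Your proposal is correct and follows essentially the same route as the paper's proof: express the $z^k$-coefficients of $\frac{d}{dA}\phi_m$ for $m>0$ as finite sums of polynomial values via Proposition~\ref{prop_polynomiality} (using $P_1(u)=u$ and $u^3\mid P_a$ to make $P_a(u)/u$ polynomial), evaluate the interpolating polynomial at negative integers via Lemma~\ref{Lemma_polynomiality}, and identify the boundary term $nP_{k-1}(n)$ produced by the lemma with the correction $-mz\phi_m\delta_{m<0}$ using oddness of the $P_a$. The only difference is cosmetic — you work with the symmetric form $\frac12\sum_{i+j=m}\frac{m^2}{ij}\phi_i\phi_j$ of Proposition~\ref{prop:anomaly} and keep the factor $\frac12(i+j)^2$ inside the lemma's summand, whereas the paper uses the asymmetric rewriting $m\sum_j\phi_j\phi_{m-j}/(m-j)$ and pulls the factor $m$ outside.
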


\begin{proof}[Proof of Corollary~\ref{Cor_polynomiality_Anomaly}]
We first rewrite the proposition as 
\[ \frac{d}{dA} \phi_m = m \sum_{j=1}^{m-1} \phi_j \cdot \frac{\phi_{m-j}}{m-j} \]
Hence for all~$m \geq 0$ we have
$\frac{d}{dA} \phi_m = \sum_n Q_n(m) z^n$
where the polynomials $Q_n$ are determined by
\[ Q_n(m) = m \sum_{\substack{k + \ell = n \\ k, \ell \geq 1}} \sum_{j=1}^{m-1} P_k(j) \frac{P_{\ell}(m-j)}{m-j} \]
for all $m \geq 0$.
Here~$P_k(m)$ are the polynomials of Proposition~\ref{prop_polynomiality}.

For all $m>0$ by Lemma~\ref{Lemma_polynomiality} we have
\begin{align*} Q_n(-m) 
& = -(-m) \sum_{\substack{k + \ell = n \\ k, \ell \geq 1}} \sum_{j=1}^{m} P_k(-j) \left( \frac{P_{\ell}(u)}{u} \right)\Big|_{u=-m+j}  \\
& = -m \sum_{\substack{k + \ell = n \\ k, \ell \geq 1}} \sum_{j=1}^{m-1} P_k(j) \frac{P_{\ell}(-m+j)}{-m+j} - m P_{n-1}(m),
\end{align*} 
where we used the second part of Proposition~\ref{prop_polynomiality} for the last equality.
Summing up we obtain as desired
\[ \phi_{-m}^A = \sum_{n} z^n Q_n(-m) = -\frac{d}{dA} \phi_m - m z \phi_m. \qedhere \]
\end{proof}

\begin{proof}[Proof of \cref{prop:anomaly}]
We give first a proof via generating series. As in the proof of \cref{thm:sol} consider the generating series
\[ g(y) = \sum_{m \geq 1} \frac{\phi_m}{m} y^m \]
and let~$D_y = y \frac{d}{dy}$.
We need to prove the equality
\[ \frac{d}{d \A(z)} g(y) = g(y) D_y g(y). \]

Let~$f(x) = \frac{\Theta(x+z)}{\Theta(x)}$ so that
$f(g(y)) = \frac{1}{y}$. Then by~$[ \frac{d}{d \A}, D_z ]=2 \ind$ we have
\begin{equation} \label{aaa}
\frac{d}{d \A(z)} f(x) = \frac{d}{d \A(z)} \frac{e^{D_z x} \Theta(z)}{\Theta(x)} = 
\frac{[\frac{d}{d \A(z)}, D_z] x e^{D_z x} \Theta(z)}{\Theta(x)} = 
x \cdot f(x).
\end{equation}
Applying~$\frac{d}{dA}$ to~$f(g(y))=1/y$ we get
$(\frac{d}{dA}f)(g(y)) + (D_x f)(g(y)) \frac{d}{dA} g(y) = 0$,
and hence
\[ \frac{d}{d\A(z)} g(y) = - \frac{g(y)}{y \cdot (D_x f)}. \]
Since we also have
\[ D_y( f(g(y)) ) = (D_x f)(g(y)) D_y(g(y)) = - \frac{1}{y}, \text{ and hence } \frac{1}{D_x f} = -y \cdot D_y g(y) \]
the claim follows.
\end{proof}

We give a more direct proof of \cref{prop:anomaly} using the following combinatorial Lemma whose proof follows directly from Lagrange inversion
and is left to the reader. 
\begin{lem}\label{lem:powerseries} Let~$f(x)$ be a power series and~$k\in \n$. Then for all~$m \geq 1$ we have
\[\frac{1}{m}\cdot [ f(x)^m ]_{x^{m-k}} = \frac{1}{k}\sum_{n_1+\ldots+n_k=m}\prod_{i=1}^k \frac{1}{n_i}[ f(x)^{n_i} ]_{x^{{n_i}-1}}\]
where we write~$[ - ]_{x^m}$ for taking the coefficient of~$x^m$.
\end{lem}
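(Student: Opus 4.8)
The plan is to recognise both sides of the claimed identity as coefficients extracted from a single object, namely the compositional inverse of $x/f(x)$, and then to apply the Lagrange inversion formula twice. Throughout I write $[\,\cdot\,]_{x^j}$ for extraction of the coefficient of $x^j$.

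First I would assume $f(0) \neq 0$ and treat the general case at the end. Under this assumption there is a unique power series $x = x(t)$ with $x(0) = 0$ solving the functional equation $x = t\, f(x)$, and the Lagrange inversion formula states that for any formal Laurent series $H$ one has
\[ [t^n]\, H(x(t)) = \frac{1}{n}\,[x^{n-1}]\bigl( H'(x)\, f(x)^n \bigr), \qquad n \geq 1. \]
Setting $c_n = \frac{1}{n}[f(x)^n]_{x^{n-1}}$ and applying this with $H(x) = x$ gives $[t^n]\,x(t) = c_n$, so that $x(t) = \sum_{n \geq 1} c_n t^n$. This identifies the quantities $\frac{1}{n_i}[f(x)^{n_i}]_{x^{n_i-1}}$ appearing on the right-hand side of the lemma as the Taylor coefficients of the inverse series.

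The second step is purely formal: raising $x(t)$ to the $k$-th power,
\[ x(t)^k = \sum_{m \geq k}\Bigl( \sum_{n_1 + \cdots + n_k = m} \prod_{i=1}^k c_{n_i} \Bigr) t^m, \]
so the right-hand side of the lemma is exactly $\frac{1}{k}[t^m]\,x(t)^k$. Now I would apply Lagrange inversion a second time, with $H(x) = x^k$ and hence $H'(x) = k\,x^{k-1}$, to obtain
\[ [t^m]\, x(t)^k = \frac{1}{m}[x^{m-1}]\bigl( k\, x^{k-1} f(x)^m \bigr) = \frac{k}{m}\,[f(x)^m]_{x^{m-k}}. \]
Multiplying by $1/k$ recovers the left-hand side, completing the argument when $f(0)\neq 0$.

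The main point requiring care is the passage to an arbitrary power series $f$, since both the functional equation $x = t\,f(x)$ and the inversion step rely on $f(0) \neq 0$. Here I would invoke polynomiality: when $m < k$ both sides vanish trivially (the coefficient $[f(x)^m]_{x^{m-k}}$ is zero for $m-k<0$, and the composition sum is empty), while for $m \geq k$ both sides are polynomials with rational coefficients in the finitely many relevant Taylor coefficients of $f$, the factors $1/m$, $1/k$, $1/n_i$ being mere constants. Since these two polynomials agree on the Zariski-dense locus $\{f(0) \neq 0\}$, they agree for every $f$, which finishes the proof.
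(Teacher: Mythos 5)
Your proof is correct and takes essentially the same approach as the paper, which simply remarks that the lemma ``follows directly from Lagrange inversion and is left to the reader'': your two applications of the Lagrange--B\"urmann formula (with $H(x)=x$ and then $H(x)=x^k$) are exactly that argument carried out in full. As a minor simplification, the degenerate case needs no Zariski-density argument, since when $f(0)=0$ the series $f(x)^n$ has no terms below degree $n$, so both sides of the identity vanish identically.
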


\begin{proof}[Second proof of \cref{prop:anomaly}]
Observe that by \eqref{aaa} we have
$$\frac{d}{d A} \left[ \left(\frac{\Theta(x+z)}{\Theta(x)}\right)^m \right]_{x^{-1}} = m \left[ \left(\frac{\Theta(x+z)}{\Theta(x)}\right)^m \right]_{x^{-2}},$$
Applying \cref{lem:powerseries} with~$k=2$ and~$f=x\frac{\Theta(x+z)}{\Theta(x)}$ 
yields the desired result. 
\end{proof}

\section{Differential equation of the second kind}
Recall the two defining properties of the series~$\phi_{m,n}$:
\begin{itemize}
\item the differential equation:~$\displaystyle D_{\tau} \phi_{m,n} = mn\phi_m\phi_n F+(D_{\tau}\phi_m)(D_{\tau}\phi_n)$
\item the vanishing of the constant term:~$\displaystyle  \phi_{m,n} = O(q)$.
\end{itemize}
The goal of this section is to first prove that~$\phi_{m,n}$ are quasi-Jacobi forms (Theorem~\ref{Thm_phi_mn}),
and then derive their holomorphic anomaly equations (\cref{subsec:HAE2}).
\subsection{Polynomiality}
We first recall the following.
\begin{prop}\label{prop:phimn}
If~$m\neq -n$ then we have
$$\phi_{m,n} = \frac{m}{m+n}\phi_mD_{\tau}(\phi_n) + \frac{n}{m+n}D_{\tau}(\phi_m)\phi_n.$$
\end{prop}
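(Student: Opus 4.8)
The plan is to show that the claimed right-hand side
\[
\Psi_{m,n} := \frac{m}{m+n}\phi_m D_{\tau}(\phi_n) + \frac{n}{m+n}D_{\tau}(\phi_m)\phi_n
\]
(which is well-defined precisely because $m \neq -n$) satisfies the two defining conditions of $\phi_{m,n}$, namely the differential equation \eqref{defining_diff_eqn} together with the vanishing of the constant term. Since these conditions determine the series uniquely---the equation is first order in $D_{\tau} = q\frac{d}{dq}$, so once the constant term is fixed, comparing coefficients of $q^k$ recursively determines every Fourier coefficient---this yields $\phi_{m,n} = \Psi_{m,n}$.

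First I would apply $D_{\tau}$ to $\Psi_{m,n}$ via the Leibniz rule. The two cross terms $D_{\tau}(\phi_m)D_{\tau}(\phi_n)$ arising from differentiating each factor combine with total coefficient $\frac{m}{m+n} + \frac{n}{m+n} = 1$, reproducing exactly the summand $(D_{\tau}\phi_m)(D_{\tau}\phi_n)$ of \eqref{defining_diff_eqn}. The remaining terms involve $D_{\tau}^2\phi_m$ and $D_{\tau}^2\phi_n$, which I would rewrite using the defining equation \eqref{eq:dif} as $m^2 F\phi_m$ and $n^2 F\phi_n$ respectively. Collecting these gives
\[
\frac{m}{m+n}\phi_m\,(n^2 F\phi_n) + \frac{n}{m+n}(m^2 F\phi_m)\,\phi_n = F\phi_m\phi_n\cdot\frac{mn^2 + nm^2}{m+n} = mn\, F\phi_m\phi_n,
\]
which is precisely the first summand of \eqref{defining_diff_eqn}. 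Hence $\Psi_{m,n}$ solves the differential equation.

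Next I would verify the constant term. Since the constant term of $\phi_m$ in $q$ is $p^{m/2}-p^{-m/2}$, which is independent of $q$, the operator $D_{\tau}$ annihilates it, so $D_{\tau}\phi_m = O(q)$. Each summand of $\Psi_{m,n}$ therefore contains a factor that is $O(q)$, whence $\Psi_{m,n} = O(q)$, as required. For consistency one checks that the right-hand side of \eqref{defining_diff_eqn} is itself $O(q)$, using that $F = O(q)$.

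The computation is entirely elementary, so I do not expect a serious obstacle. The only points requiring care are the uniqueness statement---which rests on the first-order nature of the equation together with the normalization $\phi_{m,n} = O(q)$---and the bookkeeping of the coefficients $\frac{m}{m+n}$ and $\frac{n}{m+n}$, which is exactly where the hypothesis $m \neq -n$ is used.
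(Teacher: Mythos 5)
Your proposal is correct and matches the paper's own proof, which likewise verifies that the claimed expression satisfies the defining differential equation \eqref{defining_diff_eqn} (using \eqref{eq:dif} to eliminate $D_\tau^2\phi_m$ and $D_\tau^2\phi_n$) and has vanishing constant term; the paper merely states this in one line, while you spell out the Leibniz computation and the uniqueness argument. No gaps.
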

\begin{proof}
The differential equation follows from the defining differential equation \eqref{eq:dif} satisfied by~$\phi_m$. The vanishing of the constant term is observed directly.
\end{proof}

By definition and the polynomiality of~$\phi_m$ the series~$\phi_{m,n}$ is a power series in~$z$ and~$q$ with coefficients which are polynomials in~$m$ and~$n$.
We use Proposition~\ref{prop:phimn} to prove a stronger statement.

\begin{prop} \label{prop:polymn}
There exist polynomials~$P_{r}(u,v)$ of degree at most~$r$ in variables~$u,v$ with coefficients quasi-modular forms of weight~$r$ such that for all~$m,n \in \BZ$
\[ \phi_{m,n} = \sum_{r>0} z^r P_r(m,n). \]
Moreover, the polynomials~$P_{r}(u,v)$ are divisible by both~$u^2$ and~$v^2$.
\end{prop}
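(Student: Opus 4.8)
The plan is to prove \cref{prop:polymn} by playing the two available descriptions of $\phi_{m,n}$ against each other: the defining differential equation \eqref{defining_diff_eqn}, which holds for all $m,n$ but only yields polynomiality, and the closed formula of \cref{prop:phimn}, which holds only for $m\neq -n$ but manifestly produces quasi-modular forms. First I would record that \eqref{defining_diff_eqn} forces polynomiality in $(m,n)$ of every Fourier--Taylor coefficient. By \cref{prop_polynomiality} each coefficient $[z^a q^j]\phi_m$ is a polynomial in $m$, hence so is $[z^a q^j]D_\tau\phi_m$; since $F$ is independent of $m,n$, every coefficient $[z^r q^k]$ of the right-hand side of \eqref{defining_diff_eqn} is a polynomial in $(m,n)$. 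As $\phi_{m,n}=O(q)$ and $D_\tau$ multiplies the $q^k$-coefficient by $k$, we get $[z^r q^k]\phi_{m,n}=\tfrac1k[z^r q^k](\text{RHS})$, a polynomial in $(m,n)$ for every integer pair.

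Next I would extract the quasi-modular structure from \cref{prop:phimn}. Writing $\phi_m=\sum_k z^k P_k(m)$ as in \cref{prop_polynomiality}, that proposition gives, for $m\neq -n$,
\[ [z^r]\phi_{m,n} = \frac{1}{m+n}\,N_r(m,n), \qquad N_r(u,v) := u\sum_{a+b=r} P_a(u)\,D_\tau P_b(v) + v\sum_{a+b=r} D_\tau P_a(u)\,P_b(v), \]
where $N_r\in\QMod_r[u,v]$ has degree $\leq r+1$ (since $\deg P_k\leq k$ and $D_\tau$ preserves quasi-modularity while raising weight). The crucial point is that the apparent pole at $u+v=0$ is spurious: using that the $P_k$ are odd, so $P_b(-u)=-P_b(u)$ and $D_\tau P_b(-u)=-D_\tau P_b(u)$, one finds after relabelling $a\leftrightarrow b$ that $N_r(u,-u)=0$. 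Hence $(u+v)\mid N_r$ and $P_r:=N_r/(u+v)\in\QMod_r[u,v]$ is a genuine polynomial of degree $\leq r$ with quasi-modular coefficients of weight $r$. Finally, the polynomial $P_r(m,n)$ and the polynomial $[z^r]\phi_{m,n}$ from the first step agree for all integers with $m+n\neq 0$, a Zariski-dense set, so they agree identically; this is precisely what pins down the value at $m=-n$.

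It remains to prove the divisibility. Since $\phi_0=0$ we have $P_k(0)=0$ and $D_\tau P_k(0)=0$ for all $k$, so $N_r(0,v)=0$ and $u\mid N_r$. Using $P_1(u)=u$ and $u^3\mid P_k$ for $k\geq 2$ (\cref{prop_polynomiality}), the prefactor-$u$ term contributes $u\sum_{a+b=r}P_a(u)D_\tau P_b(v)=u^2\,D_\tau P_{r-1}(v)+O(u^4)$ (only $a=1$ survives to lowest order), while the prefactor-$v$ term is $O(u^3)$ because $D_\tau P_1=D_\tau(u)=0$ and $u^3\mid D_\tau P_a$ for $a\geq 2$; hence $u^2\mid N_r$. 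Dividing by $u+v$, a unit at $u=0$, gives $u^2\mid P_r$, and since $N_r$ (and therefore $P_r$) is symmetric in $u,v$ we also obtain $v^2\mid P_r$.

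I expect the $m=-n$ case to be the main obstacle, exactly as flagged in the introduction: there the closed formula of \cref{prop:phimn} breaks down and $\phi_{n,-n}$ is defined only through the differential equation, whose inversion (``integration in $\tau$'') does not preserve quasi-modularity. The entire argument hinges on the identity $N_r(u,-u)=0$, which shows the pole at $m+n=0$ is removable and thereby identifies the polynomial extension forced by the differential equation with the quasi-modular polynomial $N_r/(u+v)$.
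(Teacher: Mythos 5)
Your proposal is correct and takes essentially the same route as the paper's own proof: polynomiality in $(m,n)$ for all integers from the defining differential equation \eqref{defining_diff_eqn}, a quasi-modular polynomial of degree $\leq r$ obtained by dividing the numerator coming from \cref{prop:phimn} by $u+v$ (justified by its vanishing at $m=-n$), identification of the two polynomials via their agreement at all integer points with $m+n\neq 0$, and the divisibility by $u^2$ and $v^2$ inherited from the numerator. The only differences are presentational: you spell out, via the oddness of the $P_k$, why the numerator vanishes on $u+v=0$ (the paper asserts this directly), and you justify the persistence of $u^2$-divisibility after division by $u+v$ by a coprimality argument.
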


\begin{proof}
By the defining differential equation \eqref{defining_diff_eqn} and the polynomiality of~$\phi_m$ 
there exist polynomials~$P_{a,r}(u,v)$ of degree~$r+2$ with rational coefficients such that
\[ \phi_{m,n} = \sum_{r>0} z^r \sum_{a \geq 1} q^a P_{a,r}(m,n) \]
for all~$m,n \in \BZ$. Here we have~$r>0$ since~$\phi_m(z=0) = 0$ for all~$m$.

On the other hand by Proposition~\ref{prop:phimn} for all~$m,n \in \BZ$ with~$m \neq -n$ we have
\[ \phi_{m,n} = \sum_{r>0} z^r \frac{1}{m+n} \sum_{k+\ell = r} \Big( n D_{\tau}(P_k(m)) P_{\ell}(n) + m P_k(m) D_{\tau}(P_{\ell}(n)) \Big) \]
where~$P_k(u)$ are the polynomials of Proposition~\ref{prop_polynomiality}.
Since the inner sum vanishes when setting~$m=-n$ and it is polynomial of degree at most~$r+1$ in~$m,n$,
there exists a polynomial~$P_{r}(u,v)$ of degree at most~$r$ with coefficients in~$\QMod_{r}$ such that
\[ \phi_{m,n} = \sum_{r>0} z^r P_r(m,n) \]
whenever~$m \neq -n$.

The equality of polynomials
\[ \sum_{a \geq 1} q^a P_{a,r}(u,v) = P_r(u,v) \]
holds after evaluating~$(u,v)$ at~$(m,n)$ for all integers~$m \neq -n$. Hence the equality holds as an equality of polynomials.

The last statement follows 
since~$n D_{\tau}(P_k(m)) P_{\ell}(n) + m P_k(m) D_{\tau}(P_{\ell}(n))$ is divisible by both~$m^2$ and~$n^2$,
hence the same holds for the term obtained by dividing by~$m+n$.
\end{proof}

\begin{example} The first terms in the Fourier and Taylor expansions of~$\phi_{m,n}$ are
\begin{align*}
\varphi_{m,n} & = 
- mn (s^{m} - s^{-m}) (s^{n} - s^{-n}) (s-s^{-1})^2 q + O(q^2)
\end{align*}
where~$s = e^{z/2}$, and
\small
\begin{multline*}
\phi_{u,v} = \left((2 G_{2}^{2} - \frac{5}{6} G_{4}) u^{2} v^{2}\right)z^{4}
+ \Big( (-\frac{4}{3} G_{2}^{3} + \frac{2}{3} G_{2} G_{4} - \frac{7}{720} G_{6}) (u^{4} v^{2} + u^2 v^4) \\
+ (-\frac{2}{3} G_{2}^{3} + \frac{1}{6} G_{2} G_{4} + \frac{7}{720} G_{6}) u^{3} v^{3} 
+ (-\frac{2}{3} G_{2}^{3} + \frac{5}{6} G_{2} G_{4} - \frac{7}{144} G_{6}) u^{2} v^{2}\Big)z^{6} + O(z^{7}).
\end{multline*}
\normalsize
\end{example}


\subsection{Holomorphic anomaly equations} \label{sec:holomorphic anomaly equations}
From Proposition~\ref{prop:phimn} we can deduce for all~$m \neq -n$
the following anomaly equation:
\begin{equation} \label{dAphi}
\begin{aligned}
\frac{d}{dA} \phi_{m,n}
= \ \ 
& \frac{n}{m+n} \left( D_z(\phi_m) \phi_n + D_{\tau}( \frac{d}{dA} \phi_m) \phi_n + D_{\tau}(\phi_m) \cdot \frac{d}{dA} \phi_n \right) \\
+ & \frac{m}{m+n} \left( \phi_m D_{z}(\phi_n) + (\frac{d}{dA}\phi_m) \cdot D_{\tau}(\phi_n) + \phi_m \cdot D_{\tau} \frac{d}{dA} \phi_n \right)
\end{aligned}
\end{equation}
By the anomaly equation for~$\phi_m$ this gives an expression for~$\frac{d}{dA} \phi_{m,n}$ whenever~$m \neq -n$.

In case~$m,n > 0$ we can find a more efficient equation:

\begin{prop} \label{HAE_mnpositive}
For all~$m,n > 0$,
\begin{equation} \label{dAphi2}
\frac{d}{dA} \varphi_{m,n}
=
\frac{m \cdot n}{m+n} \varphi_{m+n} + \sum_{j=1}^{m-1} \frac{m}{j} \varphi_{m-j,n} \varphi_j
+\sum_{j=1}^{n-1} \frac{n}{j} \varphi_{m,n-j} \varphi_j
\end{equation}
\end{prop}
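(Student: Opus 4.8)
The plan is to recast \cref{HAE_mnpositive} as a single identity between generating series and then to establish that identity by the Lagrange-inversion change of variables already used for \cref{thm:sol} and \cref{prop:anomaly}. Set
\[ g(y)=\sum_{m\geq 1}\frac{\varphi_m}{m}y^m, \qquad h(y,w)=\sum_{m,n\geq 1}\frac{\varphi_{m,n}}{mn}y^m w^n, \]
with $D_y=y\frac{d}{dy}$ and $D_w=w\frac{d}{dw}$. Multiplying \eqref{dAphi2} by $\frac{1}{mn}y^m w^n$ and summing over $m,n\geq 1$, the three terms on the right become, respectively, $\frac{w g(y)-y g(w)}{y-w}$ (using the elementary identity $\sum_{m,n\geq 1}\frac{\varphi_{m+n}}{m+n}y^m w^n=\frac{w g(y)-y g(w)}{y-w}$), then $g(y)\,D_y h$, and finally $g(w)\,D_w h$. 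Thus \cref{HAE_mnpositive} is equivalent to the generating-series identity
\[ \frac{d}{dA}h=\frac{w g(y)-y g(w)}{y-w}+g(y)\,D_y h+g(w)\,D_w h, \]
which I will refer to as $(\ast)$.

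First I would collect the tools. The proof of \cref{prop:anomaly} gives $\frac{d}{dA}g=g\,D_y g$, while \cref{prop:phimn} and the defining differential equation \eqref{defining_diff_eqn} translate directly into the two closed forms
\[ (D_y+D_w)h=D_y g(y)\,D_\tau g(w)+D_\tau g(y)\,D_w g(w),\qquad D_\tau h=F\,D_y g(y)\,D_w g(w)+D_\tau g(y)\,D_\tau g(w). \]
I would then prove $(\ast)$ through the substitution $x=g(y)$, $x'=g(w)$, i.e.\ $f(x)=1/y$, $f(x')=1/w$ for $f(x)=\Theta(x+z)/\Theta(x)$, under which $\frac{d}{dA}$ becomes explicit: differentiating $f(g(y))=1/y$ and using $\frac{d}{dA}f=x f$ from \eqref{aaa} reproduces $\frac{d}{dA}g=g\,D_y g$, and the analogous two-variable computation should express $\frac{d}{dA}h$ directly in terms of $f$ and its derivatives, which one then matches against the right-hand side of $(\ast)$ rewritten through the same substitution.

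As a backup and cross-check I would instead verify $(\ast)$ by comparing $D_\tau$-derivatives and constant terms, using that a $q$-series is determined by its constant term together with its image under $D_\tau$. From $[\frac{d}{dA},D_\tau]=D_z$ one has $D_\tau\frac{d}{dA}h=\frac{d}{dA}D_\tau h-D_z h$, and $\frac{d}{dA}D_\tau h$ is computable from the closed form for $D_\tau h$ together with $\frac{d}{dA}F=2\,D_z D_\tau\Theta/\Theta$ (which follows from $\frac{d}{dA}\Theta=0$) and $\frac{d}{dA}g=g\,D_y g$. After cancelling the terms common to both sides, the equality $D_\tau(\text{left})=D_\tau(\text{right})$ reduces to a relation of the schematic shape
\[ D_z h+D_\tau g(y)\,D_y h+D_\tau g(w)\,D_w h=\big(\text{an explicit polynomial in }g\text{ and its derivatives}\big). \]

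The main obstacle is exactly this last relation. Both the $D_\tau$-comparison and the parallel $(D_y+D_w)$-comparison leave residual single-derivative terms $D_z h$, $D_y h$, $D_w h$ that are \emph{not} pinned down by the two closed forms above; knowing $(D_y+D_w)h$ does not determine $D_y h$ and $D_w h$ separately. This is the generating-series shadow of the $m=-n$ difficulty and of the failure of the algebra of quasi-Jacobi forms to be closed under integration that is flagged in the introduction: the operators $\frac{d}{dA}$ and $D_\tau$ do not close on the $\varphi_m$ because $D_z\varphi_m$ is genuinely new data. The reason for pushing the argument through the explicit change of variables $x=g(y)$ is precisely that the exact law $\frac{d}{dA}f=x f$ supplies this missing information and lets $(\ast)$ be closed without appealing to integration. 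I also expect the merge term $\frac{w g(y)-y g(w)}{y-w}$, i.e.\ the coefficient $\frac{mn}{m+n}\varphi_{m+n}$, to demand the most care, since it is the only contribution that raises the total index and is exactly what the Lagrange-inversion identity for $\sum_{m,n}\frac{\varphi_{m+n}}{m+n}y^m w^n$ is there to produce.
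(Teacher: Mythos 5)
Your reduction of the proposition to the generating-series identity $(\ast)$ is correct (both the translation of the term $\frac{mn}{m+n}\varphi_{m+n}$ into $\frac{wg(y)-yg(w)}{y-w}$ and of the two convolution sums into $g(y)D_yh$ and $g(w)D_wh$ check out), and your toolkit is the right raw material: this is the same generating-series-plus-Lagrange-inversion strategy as the paper's proof. But the plan has a genuine gap, and it is exactly the obstacle you flag in your last paragraph and then wave away. Your series $h$ is pinned down by your two closed forms only through $(D_y+D_w)h$ and $D_\tau h$, i.e.\ only modulo the non-local operator $(D_y+D_w)^{-1}$; the substitution $x=g(y)$, $x'=g(w)$ does nothing to this operator, and the law $\frac{d}{dA}f=xf$ of \eqref{aaa} is information about $f$ alone --- it re-derives $\frac{d}{dA}g=g\,D_yg$, but it cannot ``supply'' $D_yh$ and $D_wh$ separately, since unlike $g$ your series $h$ satisfies no a priori algebraic relation with $f$ under the change of variables; producing such a relation is tantamount to proving the theorem. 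Concretely, after applying $(D_y+D_w)$ or $D_\tau$ to $(\ast)$ one is left, as you say, with the residual unknowns $D_yh$, $D_wh$, $D_zh$, constrained only by the known value of the sum $D_yh+D_wh$; nothing in your proposal yields the missing second relation, which is essentially the recursion \eqref{eq:rec2} (that is, \cref{prop:recursion}) --- the actual content of the proposition.

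The two ideas that close this gap in the paper are absent from your plan. First, the paper equates the known expression \eqref{dAphi} (obtained by applying $\frac{d}{dA}$ to \cref{prop:phimn}) with the claimed expression \eqref{dAphi2}, and substitutes \cref{prop:anomaly}; this eliminates $\frac{d}{dA}$ altogether and reduces the proposition to the $\frac{d}{dA}$-free identity \eqref{eq:rec2}. In generating-series form the unknown series $\sum_{m,n}\frac{\phi_{m,n}}{m}x^my^n$ then enters only as $(D_x+D_y)^{-1}$ applied to \emph{explicit} functions (built from the paper's $h(x,y)=D_xg(x)\,D_\tau g(y)+D_\tau g(x)\,D_yg(y)$, not your $h$), and the rewriting $D_y=(D_x+D_y)-D_x$ makes the two non-local terms combine into a single one with the scalar factor $D_yg(y)-D_xg(x)$; dividing by this factor and applying $(D_x+D_y)$ removes the inverse operator entirely, giving \eqref{5dsad}. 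Only at that point is Lagrange inversion applied. Second, the resulting identity \eqref{eqn_to_vanish} among derivatives of $f$ is not settled by ``matching'' routine algebra: the paper proves it by showing that the relevant expression $\CF(x,y,z,\tau)$, a rank-three quasi-Jacobi form of weight $6$, is annihilated by $\frac{d}{dG_2}$, $\frac{d}{dA(x)}$, $\frac{d}{dA(y)}$, is periodic and holomorphic with prescribed vanishing at $y=-z$, hence constant, and that the constant is zero. Without an analogue of the elimination trick and of this vanishing argument, your proposal stalls exactly where you predict it will.
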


\begin{proof}
We prove that the right hand side in \eqref{dAphi2} is equal to the right hand side in \eqref{dAphi}.
By the anomaly equation for~$\phi_m$ and comparing terms it is equivalent to prove the following equation for all~$m,n \geq 1$:
\begin{align}\label{eq:rec2}\phi_{m+n}=\frac{1}{m}D_z(\phi_m)\phi_n+\frac{1}{n}\phi_mD_z(\phi_n)+\sum_{i+j=m}\frac{1}{i}\phi_{i,n}\phi_j + \sum_{i+j=n}\frac{1}{i}\phi_{i,m}\phi_{j}.\end{align}
We multiply both sides with~$x^m y^n$ and sum over all~$m,n \geq 1$. With~$g(x) = \sum_{m \geq 1} x^m \phi_m/m$ the equation becomes
\begin{multline} \label{13445}
\frac{y D_x g(x) - x D_y g(y)}{x-y} = D_z g(x) \cdot D_y g(y) + D_x g(x) \cdot D_z g(y) \\
+ \left( (D_x + D_y)^{-1} D_y h(x,y) \right) D_x g(x) + \left( (D_x + D_y)^{-1} D_x h(x,y) \right) D_y g(y)
\end{multline}
where~$(D_x + D_y)^{-1}$ acts term-wise by multiplying the coefficient of~$x^m y^n$ by~$(m+n)^{-1}$ (this is well defined since both~$m,n$ are positive for all non-zero coefficients)
and we have used
$(D_x + D_y) \sum_{m, n \geq 1} \frac{\phi_{m,n}}{m} x^m y^n = D_y h(x,y)$ 
with
\[ h(x,y) = D_x g(x) \cdot D_{\tau} g(y) + D_{\tau} g(x) \cdot D_y g(y). \]

Rewriting~$D_y = (D_x + D_y) - D_x$ we have 
\[ (D_x + D_y)^{-1} D_y h = h - (D_x + D_y)^{-1} D_x h. \]
Inserting this the~$(D_x + D_y)^{-1}$ term factors out and we obtain that \eqref{13445} is equivalent to
\begin{multline*}
D_x h = 
(D_x + D_y) \Bigg( \frac{1}{ D_y g(y) - D_x g(x)} \\
\times \left( \frac{y D_x g(x) - x D_y g(y)}{x-y} - D_z g(x) \cdot D_y g(y) + D_x g(x) \cdot D_z g(y) \right) \Bigg)
\end{multline*}
Expanding and using that~$(D_x + D_y)( y/(x-y) ) = 0$ this is equivalent to
\begin{multline} \label{5dsad}
\left( D_x^2 g(x) \cdot D_y g(y) - D_x g(x) \cdot D_y^2 g(y) \right) \cdot (1 + D_z g(x) + D_z g(y) + h ) \\
+ (D_y g(y) - D_x g(x)) \cdot \Big( D_z\big( D_x g(x) \cdot D_y g(y) \big) + D_x g(x) D_y (h) + D_y g(y) D_x h \Big) = 0.
\end{multline}

We consider again the function
\[ f(x) = \frac{\Theta(x+z)}{\Theta(x)} \]
and apply the variable change
\[ x = \frac{1}{f(\tilde{x})},\ y = \frac{1}{f(\tilde{y})} \quad \Longleftrightarrow \quad \tilde{x} = g(x),\ \tilde{y} = g(y). \]
Let us denote~$f'(x) = \frac{d}{dx} f(x)$. We then have the transformations
\begin{alignat*}{2}
D_x g(x) & = -\frac{f}{f'} & \quad \quad 
D_z g(x) & = -\frac{D_z f}{f} \\
D_x^2 g(x) & = -\frac{f}{f'} \cdot \frac{f'' f - (f')^2}{(f')^2} &
D_{\tau} g(x) & = -\frac{D_{\tau} f}{f'} \\
D_x D_{\tau} g(x) & = -\frac{f}{f'} \cdot \frac{f'' D_{\tau}(f) - f' D_{\tau}(f')}{(f')^2}, & 
D_x D_{z} g(x) & = -\frac{f}{f'} \cdot \frac{f'' D_{z}(f) - f' D_{z}(f')}{(f')^2}
\end{alignat*}
where on the right hand side we have omitted the argument~$\tilde{x}$ in~$f$ and its derivatives.

After changing variables and clearing denominators we find that \eqref{5dsad} is equivalent to
\begin{equation} \left(f''(x) f(x) f'(y)^2 - f''(y) f(y) f'(x)^2 \right) \cdot C + \left(f(x) f'(y) - f'(x) f(y) \right) \cdot D = 0 \label{eqn_to_vanish} \end{equation}
where we have written~$x,y$ for~$\tilde{x}, \tilde{y}$ and
\begin{align*}
C & = f'(x) f'(y) - D_z f(x) \cdot f'(y) - f'(x) D_z f(y) + f(x) D_{\tau}f(y) + D_{\tau} f(x) \cdot f(y) \\
D & = \left( f''(x) D_{z}f(x) - f'(x) D_z f'(x) \right) f'(y)^2 \\
 & + \left( f''(y) D_{z}f(y) - f'(y) D_z f'(y) \right) f'(x)^2 \\
 & - \left( f''(x) f(x) - f'(x)^2 \right) f'(y) D_{\tau} f(y) - \left( f''(x) D_{\tau} f(x) - f'(x) D_{\tau} f'(x) \right) f(y) f'(y) \\
 &  - \left( f''(y) f(y) - f'(y)^2 \right) f'(x) D_{\tau} f(x) - \left( f''(y) D_{\tau} f(y) - f'(y) D_{\tau} f'(y) \right) f(x) f'(x).
\end{align*}

Let~$\CF(x,y,z,\tau)$ be the left hand side of \eqref{eqn_to_vanish}. We need to show that~$\CF = 0$.
We will argue as in Section~\ref{subsection:proof_gen_series}. Since it is a polynomial in derivatives of Jacobi forms the function~$\CF$
is a quasi-Jacobi form of the three elliptic variables~$x,y,z$. It is of weight~$6$ and index
\[
L = \begin{pmatrix} 0 & 0 & 3/2 \\ 0 & 0 & 3/2 \\ 3/2 & 3/2 & 3/2 \end{pmatrix}.
\]
A quick check using the commutation relations \eqref{eq:comm relations 2} shows that in the algebra of such quasi-Jacobi forms we have
\[
\frac{d}{dG_2} \CF = \frac{d}{d A(x)} \CF = \frac{d}{dA(y)} \CF = 0.
\]
By a direct check (e.g. using a computer)
$\CF$ has no poles at~$y=0$
and vanishes to order~$3$ at~$y=-z$. Hence the ratio
\[ \frac{\CF(x,y)}{f(x)^3 f(y)^3}, \]
is holomorphic in~$y$.
Since by \cite[Lem.\ 6]{OPix2} it is also~$2$-periodic, we find that it is constant in~$y$.
But~$\CF$ is symmetric in~$x$ and~$y$ so it is also constant in~$x$.
By checking that the constant term vanishes we are done.
\end{proof}

\begin{remark}
In the proof we established \eqref{eq:rec2}, which is precisely Proposition~\ref{prop:recursion}.

By Proposition~\ref{prop:phimn} for all~$m, n > 0$ the function~$\phi_{m,n}$ is determined by~$\phi_m$ and~$\phi_n$.
Hence \eqref{eq:rec2} yields recursive formulas for~$\phi_m$,
and hence provides an alternative definition of the set of functions~$\phi_m$
starting from the initial condition~$\phi_1= \Theta(z)$.
For example, the case~$(n,1)$ yields
\[ \phi_{n+1} = D_z(\phi_1)\phi_n+\frac{1}{n}\phi_1D_z(\phi_n)+\sum_{i=1}^{n-1}\frac{1}{i} \phi_{i,1}\phi_{n-i}. \qedhere \]
\end{remark}

\subsection{Proof of Theorem~\ref{Thm_phi_mn}}
We need to show that for all~$n \geq 1$ we have
\[ \phi_{n,-n} -n \in \QJac_{0, n}. \]
The idea of the proof is to consider the two expressions for~$\frac{d}{dA} \phi_{m,n}$
for positive~$m,n$ given by \eqref{dAphi} and \eqref{dAphi2}.
These terms are equal for~$m > 0$, and (with minor modifications) they have natural extensions to~$m \le 0$. We will observe that these extensions are both polynomial in~$m$ (when fixing~$n$) up to the same non-polynomial correction term.
Hence they are equal for all~$m$.

Concretely, let~$n > 0$ be fixed and let~$R(m,n)$ be the right hand side of \eqref{dAphi}.
Then by Corollary~\ref{Cor_polynomiality_Anomaly} the sum of~$R(m,n)$ and
\[ -mz \delta_{m<0} \left( \frac{n}{m+n} D_{\tau}(\phi_m) \phi_n + \frac{m}{m+n} \phi_m D_{\tau}(\phi_n) \right) 
= -mz \delta_{m<0} \phi_{m,n} \]
is polynomial in~$m$. We write
\[ \widetilde{R}(m,n) = R(m,n) - mz \phi_{m,n} \delta_{m<0} \]
to denote this polynomial function.

We consider now the right hand side of \eqref{dAphi2} and we want to make sense of it for negative~$m$.
For all~$m \geq 0$, with~$m \neq n$ in the second line, define
\begin{align*}
S(m,n) & := \frac{m \cdot n}{m+n} \varphi_{m+n} + \sum_{j=1}^{m-1} \frac{m}{j} \varphi_{m-j,n} \varphi_j
+\sum_{j=1}^{n-1} \frac{n}{j} \varphi_{m,n-j} \varphi_j \\
S(-m,n) & := \frac{-m \cdot n}{-m+n} \varphi_{-m+n} + \sum_{j=1}^{m-1} \frac{m}{j} \varphi_{-m+j,n} \varphi_j
+\sum_{j=1}^{n-1} \frac{n}{j} \varphi_{-m,n-j} \varphi_j.
\end{align*}
By a direct application of Lemma~\ref{Lemma_polynomiality} the sum 
\[ \widetilde{S}(m,n)  = S(m,n) - mz \phi_{m,n} \delta_{m<0} \]
is polynomial in~$m$.

By Proposition~\ref{HAE_mnpositive} we have~$R(m,n) = S(m,n)$, hence~$\widetilde{R}(m,n) = \widetilde{S}(m,n)$ for all $m>0$.
By polynomiality in~$m$ we get~$\widetilde{R}(m,n) = \widetilde{S}(m,n)$ for all $m \neq -n$. Thus
\begin{equation} \forall m \neq -n : R(m,n) = S(m,n). \label{R=S} \end{equation}

We specialize \eqref{R=S} to~$m=-n-1$. 
Since
\[ S(-n-1,n) = -(n+1) n \phi_1 + (n+1) \phi_{-n,n} \phi_1 + \sum_{j=2}^{n} \frac{n+1}{j} \phi_{-(n+1)+j, n} \phi_j + \sum_{j=1}^{n-1} \frac{n}{j} \phi_{-(n+1), n-j} \phi_j \]
and~$\phi_1 = \Theta(z)$, the equation \eqref{R=S} yields
\[ \phi_{-n,n} - n = \frac{1}{(n+1) \Theta}\left( R(-n-1, n) - \sum_{j=2}^{n} \frac{n+1}{j} \phi_{-(n+1)+j, n} \phi_j - \sum_{j=1}^{n-1} \frac{n}{j} \phi_{-(n+1), n-j} \phi_j \right). \]
The term in the bracket on the right lies in~$\QJac_{-1,n+1/2}$ by inspection. Moreover, again by inspection it vanishes at~$z=0$. Hence it must be divisible in algebra of quasi-Jacobi forms by~$\Theta(z)$. This gives~$\phi_{-n,n} - n \in \QJac_{0,n}$.
\qed
\medskip 

\begin{remark} \label{rmk:hae}
The proof yields more information. For~$m \neq -n$ we have
$\frac{d}{dA} \phi_{m,n} = R(m,n)$ by \eqref{dAphi}.
Using that~$R(m,n) = S(m,n)$ for all~$m \neq -n$ we find the anomaly equation
\[
\frac{d}{dA} \phi_{-m,n}
=
\frac{-m \cdot n}{-m+n} \varphi_{-m+n} + \sum_{j=1}^{m-1} \frac{m}{j} \varphi_{-m+j,n} \varphi_j
+\sum_{j=1}^{n-1} \frac{n}{j} \varphi_{-m,n-j} \varphi_j
\]
where~$m, n > 0$ and~$m\neq -n$.
\end{remark}

\subsection{Holomorphic anomaly equations II} \label{subsec:HAE2}
We finally derive the precise modular properties of the functions~$\phi_{m,n}$ in terms of holomorphic anomaly equations.
\begin{prop} \label{HAEmn}
For all~$m,n \in \BZ$ we have
\begin{enumerate}
\item[\upshape(a)] ${\displaystyle \frac{d}{dG_2} \varphi_{m,n} = 2 \phi_m \phi_n}$.
\item[\upshape(b)]
$
\displaystyle \frac{d}{dA} \varphi_{m,n} =
\frac{m \cdot n}{m+n} \varphi_{m+n} + \sum_{i+j=m} \frac{|m|}{j} \varphi_{i,n} \varphi_j
+\sum_{i+j=n} \frac{|n|}{j} \varphi_{m,i} \varphi_j
$
\end{enumerate}
with the convention in {\upshape(b)} that the first term vanishes if~$m+n=0$ and that
in a sum with condition~$i+j=\ell$ (for $\ell=m$ or $\ell=n$) we sum over all positive~$i,j$ if~$\ell$ is positive, and over all negative~$i,j$ if~$\ell$ is negative.
\end{prop}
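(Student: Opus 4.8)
The plan is to prove both anomaly equations first in the range $m\neq -n$ by a direct computation, and then to reach the diagonal $m=-n$ by a polynomiality argument in the index parameters; the diagonal case of (b) will be the only serious difficulty.

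For (a) I would differentiate the identity of Proposition~\ref{prop:phimn} with $\frac{d}{dG_2}$. Since $\frac{d}{dG_2}\phi_k=0$ by Lemma~\ref{cor_main_thm}, while $[\frac{d}{dG_2},D_\tau]=-2\wt$ from \eqref{eq:comm relations 1} together with $\wt(\phi_k)=-1$ gives $\frac{d}{dG_2}D_\tau\phi_k=2\phi_k$, the Leibniz rule yields $\frac{d}{dG_2}\phi_{m,n}=2\phi_m\phi_n$ at once whenever $m\neq -n$. The difference $\frac{d}{dG_2}\phi_{m,n}-2\phi_m\phi_n$ is polynomial in $(m,n)$: the right-hand side by Proposition~\ref{prop_polynomiality}, and the left-hand side because $\phi_{m,n}$ is polynomial in $(m,n)$ by Proposition~\ref{prop:polymn} and, in contrast to $\frac{d}{dA}$, the operator $\frac{d}{dG_2}$ produces no diagonal jump, the continuation $2\phi_m\phi_n$ being regular at $m=-n$ (if one prefers, the diagonal value can be confirmed by the explicit computation described below, using that $\frac{d}{dG_2}\Theta=0$). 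Vanishing on the Zariski-dense set $\{m\neq -n\}$, the difference vanishes identically, proving (a) for all $m,n$.

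For (b) with $m\neq -n$ nothing new is required: \eqref{dAphi} identifies $\frac{d}{dA}\phi_{m,n}$ with the expression $R(m,n)$, and \eqref{R=S} from the proof of Theorem~\ref{Thm_phi_mn} identifies $R(m,n)=S(m,n)$, where $S(m,n)$ is by construction exactly the right-hand side of (b) (the case $m<0<n$ is Remark~\ref{rmk:hae}, while the cases with both indices negative, or with $mn=0$, follow from the symmetries $\phi_{m,n}=\phi_{-m,-n}$, $\phi_{-k}=-\phi_k$ and $\phi_{m,0}=0$). For the diagonal I would push the polynomiality argument of the Theorem~\ref{Thm_phi_mn} proof one step further. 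Fixing $n>0$ and applying $\frac{d}{dA}$ to the defining equation \eqref{defining_diff_eqn}, using $[\frac{d}{dA},D_\tau]=D_z$, the anomaly of $\phi_m$ from Corollary~\ref{Cor_polynomiality_Anomaly}, and then \eqref{defining_diff_eqn} once more to recombine the non-polynomial contributions into $D_\tau(mz\,\phi_{m,n})$, one finds that $D_\tau\bigl(\frac{d}{dA}\phi_{m,n}-mz\,\phi_{m,n}\delta_{m<0}\bigr)$ is polynomial in $m$ for every $m$. This quantity agrees with the genuinely polynomial $\widetilde S(m,n)$ of the Theorem~\ref{Thm_phi_mn} proof for all $m\neq -n$, so its value at $m=-n$ can differ from $\widetilde S(-n,n)$ only by a $D_\tau$-closed correction, that is, a power series in $z$ whose coefficients are independent of $q$.

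Comparing the conventions shows that (b) at $m=-n$ is equivalent to this correction being exactly $n^2z$: the polynomial continuation of the first term $\tfrac{mn}{m+n}\phi_{m+n}$ to $m=-n$ equals $-n^2z$ (since $\phi_u/u\to z$ as $u\to 0$), and the rule ``this term vanishes when $m+n=0$'' removes precisely that amount. Determining the $D_\tau$-closed correction is the heart of the matter, and I expect it to be the main obstacle: as stressed in the introduction, quasi-Jacobi forms are not closed under integration, so the constant term in $q$ of $\frac{d}{dA}\phi_{-n,n}$ is genuine initial data invisible to the differential equation (indeed the anomaly equation degenerates to $0=0$ at $q^0$). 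I would compute it by applying $\frac{d}{dA}$ to the closed formula for $\phi_{-n,n}$ obtained at the end of the Theorem~\ref{Thm_phi_mn} proof from \eqref{R=S} at $m=-n-1$: because $\frac{d}{dA}\Theta=0$ the prefactor $\tfrac{1}{(n+1)\Theta}$ passes through the derivation, and every surviving term carries off-diagonal indices, so its anomaly — and in particular its constant term in $q$ — is computable from the anomaly equation of Proposition~\ref{prop:anomaly} and the off-diagonal cases already established. Extracting the constant term should then give $\frac{d}{dA}\phi_{-n,n}\big|_{q^0}=0$, equivalently the correction $n^2z$, which completes the proof.
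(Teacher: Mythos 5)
Your off-diagonal arguments and your reduction of the diagonal case of (b) to the determination of a single $q$-independent correction are correct, and up to that point you run parallel to the paper (your recombination of the non-polynomial terms into $D_\tau(mz\,\phi_{m,n})$ is a clean packaging of the paper's restriction to the region $m<0<n$). The genuine gap is the step you yourself call the heart of the matter: the correction is never determined. You propose to apply $\frac{d}{dA}$ to the closed formula for $\phi_{-n,n}$ from the proof of Theorem~\ref{Thm_phi_mn} and to verify that the $q^0$-coefficient of $\frac{d}{dA}\phi_{-n,n}$ vanishes, but this is left as an expectation (``should then give $0$''). Carrying it out would mean proving, for every $n$, a nontrivial family of Laurent-polynomial identities in $p^{1/2}$ built from $[\frac{d}{dA}\phi_k]_{q^0}=\frac{1}{2}\sum_{i+j=k}\frac{k^2}{ij}(p^{i/2}-p^{-i/2})(p^{j/2}-p^{-j/2})$, from the second anomalies $(\frac{d}{dA})^2\phi_k$ hidden in $\frac{d}{dA}R(-n-1,n)$, and so on; that cancellation is essentially equivalent to the proposition itself, and you offer no mechanism for it. The paper needs no computation at this point: $\frac{d}{dA}\phi_{-n,n}$ and the two off-diagonal sums are homogeneous quasi-Jacobi forms of weight $-1$ and index $n$, so their difference is a $q$-independent form of pure weight $-1$ and index $n$; its $z^k$-Taylor coefficients are then constants lying in $\QMod_{k-1}$, hence zero for $k\neq 1$ by the grading of $\QMod$, and the leftover multiple of $z$ is excluded because $z$ is not a quasi-Jacobi form (it admits no Fourier expansion in $p$). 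This homogeneity argument --- the paper's ``constant terms in $q$ must match up'' --- is the missing idea, and without it (or the completed computation) the proof of (b) at $m=-n$ is incomplete.

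Part (a) contains a second gap at the diagonal. The claim that $\frac{d}{dG_2}\phi_{m,n}$ is polynomial in $(m,n)$ because $\phi_{m,n}$ is, together with the assertion that ``$\frac{d}{dG_2}$ produces no diagonal jump'', is circular: the absence of a jump is exactly what has to be proved, and regularity of the continuation $2\phi_m\phi_n$ at $m=-n$ proves nothing, since the continuation of the off-diagonal formula for $\frac{d}{dA}\phi_{m,n}$ is equally regular there and yet that anomaly does jump, by $n^2z$. Note also that polynomiality of $\phi_{m,n}$ does not pass to its $G_2$-anomaly, because $\frac{d}{dG_2}$ is not the coefficient-wise $G_2$-derivative of the $z$-expansion (for instance $\frac{d}{dG_2}\Theta=0$ while the $z^3$-coefficient of $\Theta$ equals $-G_2$). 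In fact, on $\CR$ one has the identity $\frac{d}{dG_2}=(\text{coefficient-wise }G_2\text{-derivative})+2z^2\ind+2z\frac{d}{dA}$, so the absence of a jump for $\frac{d}{dG_2}\phi_{m,n}$ is precisely a cancellation between the jump $n^2z$ of $\frac{d}{dA}\phi_{m,n}$ and the jump $-n^2$ of $\ind\phi_{m,n}$ (caused by the index-zero constant $|n|\,\delta_{m+n,0}$); in other words, your argument for (a) tacitly presupposes the diagonal case of (b). The paper's proof of (a) bypasses all of this: apply $\frac{d}{dG_2}$ to the defining equation \eqref{defining_diff_eqn}; since $\frac{d}{dG_2}\phi_k=0$, $\frac{d}{dG_2}F=0$, $\wt\phi_{m,n}=0$ and $\frac{d}{dG_2}D_\tau\phi_k=2\phi_k$, one obtains $D_\tau\bigl(\frac{d}{dG_2}\phi_{m,n}-2\phi_m\phi_n\bigr)=0$ for all $m,n$ simultaneously, and the same graded argument as above kills the $q$-constant.
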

\begin{proof}
Part (a) follows from the defining differential equation \eqref{defining_diff_eqn} by applying~$d/dG_2$.
In part (b) by Proposition~\ref{HAE_mnpositive} and  Remark~\ref{rmk:hae} we only need to prove the case~$m=-n$. 
For that we restrict ourself to the region~$m<0$ and~$n>0$.
Applying~$d/dA$ to \eqref{defining_diff_eqn} yields
\[ D_z \phi_{m,n} + D_{\tau} \frac{d}{dA} \phi_{m,n} = \frac{d}{dA}\left( mn\phi_m\phi_n F+(D_{\tau}\phi_m)(D_{\tau}\phi_n) \right). \]
The right-hand side and the first term on the left-hand side are polynomial in~$m$ and~$n$ (in the considered region).
Hence~$\frac{d}{dA} \phi_{m,n}$ is polynomial in~$m,n$ up to a constant in~$q$.\footnote{There is a small subtlety here
since at first it only follows that~$\frac{d}{dA} \phi_{m,n}$ is a power series in~$z,q$ whose coefficients are polynomial in~$m,n$.
But then~$\frac{d}{dA} \phi_{m,n}$ is a quasi-Jacobi form for every~$m,n$ so that this actually has to be a power series in~$z$
with coefficients which are polynomials with coefficients quasi-modular forms (of determined weight).}
Let~$T(u,v)$ be the polynomial series such that
\[ T(m,n) = \frac{m \cdot n}{m+n} \varphi_{m+n} + \sum_{i+j=m} \frac{|m|}{j} \varphi_{i,n} \varphi_j +\sum_{i+j=n} \frac{|n|}{j} \varphi_{m,i} \varphi_j \]
for all~$m \neq -n$ in the region.
We already know~$T(m,n) = \frac{d}{dA} \phi_{m,n}$ for all~$m \neq -n$ so by the polynomiality of~$\frac{d}{dA} \phi_{m,n}$ we get
for all~$m,n$ in the region
\[ T(m,n) = \frac{d}{dA} \phi_{m,n} + c_{m,n}(z) \]
for some~$c_{m,n}(z)$ which does not depend on~$q$.
Specializing to~$m=-n$ we see
\[ \frac{d}{dA} \phi_{-n,n} + c_{-n,n}(z) = T(-n,n) = -n^2 z + \sum_{i+j=-n} \frac{n}{j} \varphi_{i,n} \varphi_j +\sum_{i+j=n} \frac{n}{j} \varphi_{-n,i} \varphi_j. \]
But~$\frac{d}{dA} \phi_{-n,n}$ is homogeneous as a quasi-Jacobi form of weight~$-1$ and index~$n$.
Hence the constant terms in~$q$ on both sides must match up and so as desired
\[ \frac{d}{dA} \phi_{-n,n} = \sum_{i+j=-n} \frac{n}{j} \varphi_{i,n} \varphi_j +\sum_{i+j=n} \frac{n}{j} \varphi_{-n,i} \varphi_j. \qedhere \]
\end{proof}

\begin{remark}
Once we know that~$\phi_{n,-n}$ is quasi-Jacobi and know its~$A$-derivative
it is not difficult to derive a recursive formula for it (ignoring that we already obtained a formula in the proof of Theorem~\ref{Thm_phi_mn}).
Indeed, consider the defining differential equation
$$D_{\tau} \phi_{m,n} = mn\phi_m\phi_n F+(D_{\tau}\phi_m)(D_{\tau}\phi_n).$$
Applying~$\frac{d}{dA}$ twice and using the commutation relations we get
\[
(|m|+|n|) \phi_{m,n} + 2 D_z \frac{d}{dA} \phi_{m,n} + D_{\tau} \left( \frac{d}{dA} \right)^2 \phi_{m,n}
=
 \left( \frac{d}{dA} \right)^2 \Big( mn\phi_m\phi_n F+(D_{\tau}\phi_m)(D_{\tau}\phi_n) \Big)
\]
Since~$(\frac{d}{dA})^i \phi_{m,n}$ is determined recursively from functions indexed by~$m',n'$ with~$m'+n'<m+n$ this yields one more formula for~$\phi_{m,n}$.
\end{remark}

\section{The classical Kaneko--Zagier equation} \label{sec:KZequations}
The differential equation introduced by Kaneko and Zagier \cite{KZ} can be characterized among quadratic differential equations
as those for which the solution space is invariant under the modular transformation for the full modular group, so that it is essentially unique \cite{KK2}.
If one however considers congruence subgroups, further differential equations of the same type have been found by Kaneko and Koike \cite{KK}.
In this section we give a general construction which takes as input a meromorphic Jacobi form of weight~$-1$
and gives as output a differential equation of Kaneko--Zagier type.
The two Kaneko--Zagier equations above and our case studied in this paper are all given by this construction.\footnote{A certain differential equation for index~$1$ Jacobi forms was studied by Kiyuna \cite{Kiyuna} and was called a Kaneko--Zagier type equation. Howver, since it is of~$4$-th order it does not fit our framework.}

\subsection{A general construction}
A general recipe to construct Kaneko--Zagier type differential equations is as follows.
Let~$g$ be a meromorphic Jacobi form of weight~$-1$.
Define
\[ E(\tau) = \frac{D_\tau g(\tau)}{g(\tau)} \quad \text{and} \quad H(\tau) = \frac{D_{\tau}^2 g(\tau) }{g(\tau)}. \]
For all~$m \geq 1$ we consider the differential equation
\[ D_{\tau}^2 g_m = m^2 H(\tau) g_m. \]

To obtain the connection to the classical presentation, we set~$m = k+1$, and consider
\[ f_k = g_{k+1} / g^{k+1} \]
which is of weight~$k$.
The corresponding differential equation for~$f_k$ reads
\[
D_{\tau}^2 f_k + 2 (k+1) E(\tau) D_{\tau} f_k +k(k+1)(E(\tau)^2-H(\tau)) f_k = 0.
\]
For this choice of~$g$ (and hence of~$E$), we define a modified Serre derivative
\[\theta_g= D_\tau+E \wt.\]
The operator~$\theta_g$ is a derivation vanishing on~$g$. Moreover, the above differential equation can be rewritten as
\begin{equation}\label{eq:kz2} \theta_g^2 f_k = H \wt (\wt + 2)  f_k. \end{equation}

We give several examples:
\begin{enumerate}
\item[(0)] In this paper we considered the case~$g(z,\tau)=\Theta(z)$. 
\item[(1)] For the classical Kaneko--Zagier equation we let
\[ g(\tau) = \frac{1}{\eta(\tau)^2} \]
and get~$H(\tau) = E_4(\tau)/144$. The operator~$\theta_g$ is the Serre derivative. 
\item[(2)] For the differential equation studied in \cite{KK} we take
\[ g(\tau) = \frac{1}{\eta(\tau) \eta(2 \tau)} \]
and get 
\[ E(\tau) =  \frac{1}{24} (E_2(\tau) + 2 E_{2}(2\tau)) \qquad
2^6 H(\tau) 
= \frac{1}{5} (E_4(\tau) + 4 E_4(2 \tau)). 
\]
The operator~$\theta_g$ matches the derivative operator of \cite[Sec.\ 2]{KK}. 
\end{enumerate}

\subsection{Recursive construction of the solutions}
Let~$f_k$ and~$f_l$ be two solutions of~(\ref{eq:kz2}) of weight~$k$ and~$l$ respectively. We write 
\begin{align*}
[f,h] & := k \theta_g(f) h - l f \theta_g(h) \\
& = k D_{\tau}(f)h-lf D_{\tau}(h)
\end{align*}
which specializes to the first Rankin-Cohen bracket on modular forms.
\begin{prop}
We have
\begin{gather*}
\theta_g[f_k,f_l] = \frac{k-l}{l+2}[f_k,\theta_g(f_l)] \\
\theta_g^2[f_k,f_l] = (k-l)(k-l-2)H[f_k,f_l]+k(k-l)f_k[f_l,H].
\end{gather*}
\end{prop}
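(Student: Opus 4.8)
The plan is to work directly with the operator $\theta_g = D_\tau + E\,\wt$, using that it is a derivation which annihilates $g$, and that both $f_k$ and $f_l$ satisfy the second-order equation \eqref{eq:kz2}, namely $\theta_g^2 f_k = H\,\wt(\wt+2)f_k = k(k+2)H f_k$ (since $f_k$ has weight $k$), and similarly $\theta_g^2 f_l = l(l+2)H f_l$. The first identity is a relatively clean bracket computation; the second builds on the first, so I would establish them in order.

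\emph{First identity.} I would compute $\theta_g[f_k,f_l]$ by applying the derivation $\theta_g$ to $[f_k,f_l] = k\,\theta_g(f_k)f_l - l f_k\,\theta_g(f_l)$. Since $\theta_g$ is a derivation, this gives
\[
\theta_g[f_k,f_l] = k\,\theta_g^2(f_k)\,f_l + k\,\theta_g(f_k)\theta_g(f_l) - l\,\theta_g(f_k)\theta_g(f_l) - l f_k\,\theta_g^2(f_l).
\]
Now substitute $\theta_g^2(f_k) = k(k+2)H f_k$ and $\theta_g^2(f_l)=l(l+2)H f_l$. The two outer terms combine into $H f_k f_l\,[k^2(k+2) - l^2(l+2)]$, while the middle terms give $(k-l)\theta_g(f_k)\theta_g(f_l)$. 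On the other side, I would expand the claimed answer $\tfrac{k-l}{l+2}[f_k,\theta_g(f_l)]$, noting that $\theta_g(f_l)$ has weight $l+2$, so by definition of the bracket $[f_k,\theta_g(f_l)] = k\,\theta_g(f_k)\theta_g(f_l) - (l+2) f_k\,\theta_g^2(f_l) = k\,\theta_g(f_k)\theta_g(f_l) - (l+2)l(l+2)H f_k f_l$. Multiplying by $\tfrac{k-l}{l+2}$ and comparing the $\theta_g(f_k)\theta_g(f_l)$-coefficient and the $H f_k f_l$-coefficient against the direct expansion reduces to the polynomial identity $k^2(k+2)-l^2(l+2) = k(k-l) - l(k-l)(l+2)$, which is routine to verify.

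\emph{Second identity.} Here I would apply $\theta_g$ once more to the first identity. Using $\theta_g[f_k,\theta_g(f_l)]$ and treating $\theta_g(f_l)$ as a weight-$(l+2)$ solution-like object, I would need $\theta_g^2(\theta_g(f_l)) = \theta_g(\theta_g^2 f_l) = \theta_g(l(l+2)H f_l) = l(l+2)\big(\theta_g(H)f_l + H\theta_g(f_l)\big)$, so derivatives of $H$ enter and produce the bracket $[f_l,H]$. The strategy is to expand $\theta_g^2[f_k,f_l] = \theta_g\big(\tfrac{k-l}{l+2}[f_k,\theta_g(f_l)]\big)$, apply the first identity formula with $f_l$ replaced by $\theta_g(f_l)$ where legal, carefully substitute the $\theta_g^2$-values, and collect the terms proportional to $H[f_k,f_l]$ versus those proportional to $f_k[f_l,H]$. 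The main obstacle is exactly this bookkeeping: $\theta_g(f_l)$ is \emph{not} itself a solution of \eqref{eq:kz2} (its weight is $l+2$ but $\theta_g^2$ acting on it picks up the extra $\theta_g(H)$ term), so the first identity cannot be reapplied as a black box, and I expect the delicate part to be keeping the weight grading and the $\theta_g(H)$ contributions consistent so that everything recombines into the two stated brackets with the correct scalar coefficients $(k-l)(k-l-2)$ and $k(k-l)$. I would verify the final scalar matching through the same kind of polynomial identity in $k,l$ as in the first part.
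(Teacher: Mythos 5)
Your overall strategy---expand with the derivation property of $\theta_g$, substitute $\theta_g^2 f_k = k(k+2)Hf_k$, $\theta_g^2 f_l = l(l+2)Hf_l$, and match coefficients---is exactly the ``direct computation'' the paper intends, but your execution of the first identity does not close, and the polynomial identity you call ``routine to verify'' is false: at $(k,l)=(4,0)$ it reads $96=16$. Your two expansions are correct for the normalization you chose, $[f_k,f_l]=k\theta_g(f_k)f_l-lf_k\theta_g(f_l)$:
\begin{align*}
\theta_g[f_k,f_l] &= \bigl(k^2(k+2)-l^2(l+2)\bigr)Hf_kf_l+(k-l)\,\theta_g(f_k)\theta_g(f_l),\\
\tfrac{k-l}{l+2}[f_k,\theta_g(f_l)] &= \tfrac{k(k-l)}{l+2}\,\theta_g(f_k)\theta_g(f_l)-l(l+2)(k-l)\,Hf_kf_l .
\end{align*}
Since $\theta_g(f_k)\theta_g(f_l)$ and $Hf_kf_l$ are in general linearly independent, equality would force the two separate identities $k-l=\tfrac{k(k-l)}{l+2}$ (i.e.\ $k=l+2$) and $k^2(k+2)-l^2(l+2)=-l(l+2)(k-l)$, both false for general $k,l$; your single displayed identity conflates these two comparisons. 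In fact the proposition is genuinely false under your reading of the bracket: in the classical Kaneko--Zagier case $g=\eta^{-2}$, $H=E_4/144$, with the solutions $f_k=E_6$ and $f_l=E_4$, one computes $\theta_g[E_6,E_4]=\tfrac{5}{3}E_4^2E_6$, whereas $[E_6,\theta_g(E_4)]=[E_6,-E_6/3]=0$. So no amount of bookkeeping rescues part one, and part two, which you only sketch conditionally on part one, falls with it.

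The root of the problem is the bracket convention. The paper's definition is internally inconsistent: with $\wt f=k$, $\wt h=l$, the two displayed lines $k\theta_g(f)h-lf\theta_g(h)$ and $kD_\tau(f)h-lfD_\tau(h)$ differ by $(k^2-l^2)Efh$, so they cannot both be the definition. The reading that makes them agree (the $E$-terms then cancel identically), that makes the bracket of modular forms modular and specializes it to the first Rankin--Cohen bracket, and that makes \cref{cor:induction} and its example correct, is the crossed one: each factor is weighted by the weight of the \emph{other} factor, $[f,h]=\wt(h)\,\theta_g(f)\,h-\wt(f)\,f\,\theta_g(h)$. Redoing your computation in that convention does work: one finds $\theta_g[f_k,f_l]=kl(k-l)Hf_kf_l-(k-l)\theta_g(f_k)\theta_g(f_l)$ and $[f_k,\theta_g(f_l)]=(l+2)\theta_g(f_k)\theta_g(f_l)-kl(l+2)Hf_kf_l$, so the coefficients match up to one overall sign, giving $\theta_g[f_k,f_l]=\tfrac{k-l}{l+2}[\theta_g(f_l),f_k]$, and iterating (now keeping the $\theta_g(H)$ terms, as you anticipated) gives $\theta_g^2[f_k,f_l]=(k-l)(k-l-2)H[f_k,f_l]+k(k-l)f_k[H,f_l]$. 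These discrepancies with the printed statement are sign slips in the paper (harmless for its application, since \cref{cor:induction} assumes $[f_l,H]=0$), but a correct proof must first fix the convention and then carry out both coefficient matchings explicitly; as written, your argument derives the printed statement from a false identity.
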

\begin{proof}
This follows from a direct computation.
\end{proof}

\begin{cor} \label{cor:induction}
Suppose that~$[f_l,H]=0$.
Then \[[f_k,f_l]g^{2l+4} \quad \text{and} \quad [f_kg^{2k+2},f_l]g^{-2k-2}\]
are solutions of~(\ref{eq:kz2}) of weight~$k-l-2$ and~$k+l+2$ respectively. 
\end{cor}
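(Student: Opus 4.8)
The plan is to reduce everything to two structural facts about the modified Serre derivative $\theta_g$. First, $\theta_g$ is a derivation vanishing on $g$, hence on every power of $g$; this yields $\theta_g(Fg^c)=\theta_g(F)g^c$ and therefore $\theta_g^2(Fg^c)=\theta_g^2(F)g^c$ for any $F$ and any constant $c$. In particular, if $F$ satisfies $\theta_g^2 F=\lambda H F$ for a constant $\lambda$, then $Fg^c$ satisfies the \emph{same} relation with the \emph{same} $\lambda$: multiplying by $g^c$ shifts the weight by $-c$ but leaves the eigenvalue unchanged. Second, a homogeneous element $F$ of weight $w$ solves~\eqref{eq:kz2} precisely when $\lambda=w(w+2)$. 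So the whole argument amounts to reading off the eigenvalue produced by the preceding Proposition, recognizing it as $w'(w'+2)$ for the intended target weight $w'$, and then correcting the actual weight by the appropriate power of $g$.

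For the first solution I would apply the second identity of the Proposition under the hypothesis $[f_l,H]=0$, which kills the $f_k[f_l,H]$ term and leaves
\[ \theta_g^2[f_k,f_l]=(k-l)(k-l-2)\,H\,[f_k,f_l]. \]
Since $(k-l)(k-l-2)=(k-l-2)\bigl((k-l-2)+2\bigr)$, the bracket $[f_k,f_l]$ carries the eigenvalue belonging to weight $k-l-2$. As $[f_k,f_l]$ is homogeneous of weight $k+l+2$, I would multiply by $g^{2l+4}$ to lower the weight to $(k+l+2)-(2l+4)=k-l-2$ while preserving the eigenvalue, obtaining a solution of~\eqref{eq:kz2} of weight $k-l-2$.

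For the second solution I would first exploit the reflection symmetry $w\mapsto -w-2$, under which the eigenvalue $w(w+2)$ is invariant. Concretely, since $f_k$ solves $\theta_g^2 f_k=k(k+2)Hf_k$ and $k(k+2)=(-k-2)(-k)$, the function $\tilde f_k:=f_kg^{2k+2}$, homogeneous of weight $-k-2$, again solves~\eqref{eq:kz2}. I would then rerun the bracket construction with $\tilde f_k$ in place of $f_k$; the hypothesis $[f_l,H]=0$ is unaffected, so the Proposition gives
\[ \theta_g^2[\tilde f_k,f_l]=(-k-2-l)(-k-4-l)\,H\,[\tilde f_k,f_l]=(k+l+2)(k+l+4)\,H\,[\tilde f_k,f_l], \]
which is the eigenvalue for weight $k+l+2$. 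Because $[\tilde f_k,f_l]$ is homogeneous of weight $(-k-2)+l+2=l-k$, multiplying by $g^{-2k-2}$ raises the weight to $(l-k)+(2k+2)=k+l+2$ and keeps the eigenvalue, so $[f_kg^{2k+2},f_l]g^{-2k-2}$ is a solution of~\eqref{eq:kz2} of weight $k+l+2$.

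I do not expect a serious obstacle once the Proposition is granted; the content is careful bookkeeping of weights together with the single observation that $\lambda$ in $\theta_g^2F=\lambda HF$ is insensitive both to a weight shift by a power of $g$ and to the reflection $w\mapsto -w-2$. The one point I would verify explicitly is that $\theta_g$ commutes with multiplication by powers of $g$ in the graded sense, i.e.\ that $\wt$ is a derivation and $\theta_g(g)=0$ (the latter being immediate from $\theta_g(g)=D_\tau(g)-Eg=0$); this is precisely what licenses the eigenvalue-preserving weight shifts used throughout.
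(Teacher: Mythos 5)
Your proof is correct and is essentially the paper's (implicit) argument: the paper states this corollary without proof as an immediate consequence of the preceding Proposition, and your bookkeeping --- killing the $f_k[f_l,H]$ term via the hypothesis, recognizing $(k-l)(k-l-2)$ and $(k+l+2)(k+l+4)$ as $w(w+2)$ for the target weights, and shifting weights by powers of $g$ using that $\theta_g$ is a derivation with $\theta_g(g)=0$ --- is exactly the derivation left to the reader. In particular, your intermediate observation that $f_k g^{2k+2}$ is itself a solution of weight $-k-2$ (the reflection $w \mapsto -w-2$ preserving the eigenvalue) is precisely what is needed to apply the Proposition to the second bracket.
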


Hence if a function~$f_{l}$ as in the corollary exists, then from any given solution we can recursively write down solutions of (\ref{eq:kz2}) with weight in the same residue class modulo~$l$.

\begin{example}
For the classical Kaneko--Zagier equation we can take~$f_l=E_4$. Then indeed~$[f_l,H]=0$, so that if~$f_k$ is a solution we have that~$[f_k,E_4]/\Delta$ is a solution of weight~$k-6$,
and~$[f_k \eta^{-4k-4}, E_4] \eta^{4k+4}$ is a solution of weight~$k+6$.
The first of these equations can also be found in \cite[Proposition 1(i)]{KK2}.
\end{example}

\begin{example}
For the Kaneko--Zagier equation in Example (2) we can take~$f_l=2 E_2(2\tau)-E_2(\tau)$. Then, indeed~$[f_l,H]=0$, so
solutions can be constructed~$4$-periodically, compare also with \cite{KK}.
\end{example}

\begin{remark}
However, in the differential equation of Example (0) considered in this paper, it turns out that the recursive structure described in Corollary~\ref{cor:induction} does not exist. To see this, suppose (for our general family of Kaneko-Zagier equations)
that there exists a solution~$f_l$ and that moreover we have~$[f_l,H]=0$. Then
the condition
$[f_l,H]=0$
is equivalent to
\[ \theta_g f_l = \frac{l}{4}\left(\frac{D_{\tau}^3g}{D_{\tau}^2g}+3\frac{D_{\tau}g}{g}\right)f_l. \]
Applying~$\theta_g$ to this equation and using the differential equation for the left hand side, we obtain
\[ 
16(l+2)\frac{D_{\tau}^2g}{g}
=
4\frac{D_{\tau}^4g}{D_{\tau}^2g}+12\frac{D_{\tau}^2g}{g}+(l-4)\left(\frac{D_{\tau}^3g}{D_{\tau}^2g}\right)^2+2(3l+4)\frac{D_{\tau}^3g}{D_{\tau}^2g}\frac{D_{\tau}g}{g}+3(3l+4)\left(\frac{D_{\tau}g}{g}\right)^2.\]
For~$g=\eta^{-2}$ this equation is only satisfied if~$l=4$, and for~$g=\left(\eta(\tau)\eta(2\tau)\right)^{-1}$ only if~$l=2$. However, for~$g=\Theta(z)$ this equation is never satisfied, so Corollary~\ref{cor:induction} cannot be applied.
\end{remark}

\end{document}